\newtheorem{theorem}{Theorem}[section]
\newtheorem{lemma}[theorem]{Lemma}
\newtheorem{definition}[theorem]{Definition}
\newtheorem{remark}[theorem]{Remark}
\theoremstyle{definition} \theoremstyle{remark}
\numberwithin{equation}{section}
\begin{document}

\title{{\bf Sharp $L^1$-convergence rates to the Barenblatt solutions for the compressible Euler equations with time-varying damping}
\thanks{The work was supported partly by the NSF of China (12171094, 11831011), and the Shanghai Key
Laboratory for Contemporary Applied Mathematics (08DZ2271900).}}
\author{Jun-Ren Luo$^{a}$,  Ti-Jun Xiao$^{b}$ \thanks{Corresponding author. \ E-mail: \ tjxiao@fudan.edu.cn}
\\{\small $^a$  College of Science, University of Shanghai for Science and Technology, Shanghai 200093, China}\\
{\small $^b$  Shanghai Key
Laboratory for Contemporary Applied Mathematics}\\ {\small School of Mathematical Sciences, Fudan University, Shanghai 200433, China}}

\date{}
\maketitle

\begin{minipage}[2cm]{14cm}

\noindent {\bf Abstract:} We study the asymptotic behavior of compressible isentropic flow when the initial mass is finite and the friction varies with time, which is modeled by the compressible Euler equation with time-dependent damping. In this paper, we obtain the best $L^1$-convergence rates to date, for any $\gamma\in(1,+\infty)$ and $\nu\in[0,1)$. Here, $\gamma$ is the adiabatic gas exponent, and $\nu$ is the physical parameter in the damping term. The key to the analysis lies in a new perspective on the relationship between the density function and the Barenblatt solution of the porous medium equation, and finding the relevant lower bound for the case of $\gamma<2$ is a tricky problem. Specialized to $\nu=0$, these convergence rates also show an essential improvement over the original rates. Moreover, for all $\gamma\in(1,+\infty)$, the results in this work are the first to present a unified form of $L^1$-convergence rates. Indeed, even for $\nu=0$, as noted in 2011, ``the current rate is difficult to improve with the current method". Our results are therefore an encouraging advancement.

\vspace{0.4cm}

\noindent {\bf Keywords:}\quad Convergence rates; Compressible Euler equations; Barenblatt solutions; Time-dependent damping.

\vspace{0.4cm}

\noindent {\bf 2020 AMS Subject Classification:} 35L65; 76S05; 35K65

\end{minipage}

\section{Introduction}
In this paper, we are concerned with the compressible Euler equations with frictional time-dependent damping in Euler coordinates as follows
\begin{eqnarray}\label{1.1}
\left\{\begin{array}{ll}
\rho_t+(\rho u)_x=0, \\
(\rho u)_t+\left(\rho u^2+p(\rho)\right)_x=-\frac{\alpha}{(1+t)^{\nu}}\rho u,
\end{array} \right.
\end{eqnarray}
with finite initial mass
\begin{equation}\label{r0u0}
(\rho, u)(x, 0)=\left(\rho_0, u_0\right)(x), \quad \rho_0(x) \geq 0, \quad \int^{+\infty}_{-\infty} \rho_0(x) d x=M>0,
\end{equation}
where $\rho, u$, and $p$, denote the density, velocity, and pressure respectively. Since we study a polytropic perfect gas, the state equation of the fluid is given by
$$p(\rho)=\kappa \rho^\gamma,$$
with $\kappa=\frac{(\gamma-1)^2}{4\gamma}$, and $\gamma>1$ is the adiabatic gas exponent. The damping term $-\frac{\alpha}{(1+t)^{\nu}}\rho u$ with physical parameter $0\leq\nu<1$ has the time-dependent frictional effect, and $\alpha=\kappa=\frac{(\gamma-1)^2}{4\gamma}$, which will simplify the form of the entropy functions used below (cf. \cite{HUA4, LIO}). Also, we use momentum $m=: \rho u$ in what follows for simplicity.

When $\nu=0$, the system \eqref{1.1} is equivalent to the following decoupled system if taking time asymptotically,
\begin{eqnarray}\label{1.3'}
\left\{\begin{array}{ll}
\bar{\rho}_t=\left(\bar{\rho}^\gamma\right)_{x x}, \\
\bar{m}=-\left(\bar{\rho}^\gamma\right)_x,
\end{array} \right.
\end{eqnarray}
where $\eqref{1.3'}_1$ is the famous porous medium equation(PME), and $\eqref{1.3'}_2$ is the Darcy's law, that is, the inertial terms in the momentum equation decay to zero faster than the other terms caused by the damping effect of the frictional force term (see Hsiao and Liu \cite{HSI}). Later, in the pioneering work of \cite{NIS1}, Nishihara improved the decay rates by utilizing detailed energy estimates when the initial data are small perturbation around $\bar{\rho}$ and thus away from vacuum. For more works with small initial data, we refer to \cite{NIS2, SID, ZHA}. By using the vanishing viscosity method, Zhu \cite{ZHU1} studied the asymptotic behavior of weak entropy solution. As for the properties of the solutions near the vacuum, we refer the reader to \cite{JIA, LIU1, LIU2}. Then, Huang, Marcati and Pan \cite{HUA2} obtained the decay rates of the weak entropy solutions of the system \eqref{1.1} to the corresponding Barenblatt solutions \eqref{1.3'} in $L^2$ or $L^{\gamma}$ space for any $\gamma > 1$. Later, in \cite{HUA4}, Huang, Pan and Wang studied the asymptotic behavior in $L^1$ space with $1<\gamma<3$ and obtained
$$
\|(\rho-\bar{\rho})(\cdot, t)\|_{L^1} \leq C(1+t)^{-\frac{1}{4(\gamma+1)}+\varepsilon}.
$$
Then, the convergence rates were improved to
$$
\|(\rho-\bar{\rho})(\cdot, t)\|_{L^1} \leq C(1+t)^{-\frac{1}{(\gamma+1)^2}+\varepsilon},
$$
for $\gamma\ge2$ in \cite{GEN1}. For Euler equation or Euler-Poisson equation with friction, we refer to \cite{CAR, GAL}. For compressible Euler equations with physical vacuum free boundaries, see \cite{ZEN, ZEN1},

When $0<\nu<1$, the system \eqref{1.1} becomes the compressible Euler equations with degenerate linear time-dependent damping. In view of Darcy's law, the solutions of \eqref{1.1} should be time-asymptotically equivalent to the Barenblatt solutions of the following system
\begin{eqnarray}\label{1.3}
\left\{\begin{array}{ll}
\bar{\rho}_t=(1+t)^{\nu}\left(\bar{\rho}^\gamma\right)_{x x}, \\
\bar{m}=-(1+t)^{\nu}\left(\bar{\rho}^\gamma\right)_x,
\end{array} \right.
\end{eqnarray}
where $\eqref{1.3}_1$ is the PME with time-dependent diffusion and $\eqref{1.3}_2$ is the Dracy's law. Recently, there have been various works for the compressible Euler equations with time-dependent damping in the literature; see \cite{CUI, HOU1, SUG1, ZHU2}. Pan \cite{PAN1,PAN2} proved that $\nu= 1, \alpha = 2$ is the critical threshold to separate the global existence and non-existence of $C^1$ solutions in one dimension. For more blow-up phenomena, we refer to \cite{CHE2, HOU, SUG}. Also, Cui-Yin-Zhang-Zhu \cite{CUI1} and Li-Li-Mei-Zhang \cite{LI} considered independently the convergence of smooth solutions with $0\le\nu<1$ and $\alpha>0$.

In \cite{GEN3}, for $1<\gamma<3$, the authors proved that any $L^{\infty}$ weak entropy solution of problem \eqref{1.1}-\eqref{r0u0} converges to the Barenblatt's solution of equation \eqref{1.3} with the same mass in $L^1$, and at the rate
$$
||(\rho-\bar{\rho}(\cdot,t))||_{L^{1}}\leq C(1+t)^{-\alpha(\varepsilon)},
$$
with
$$
\alpha(\varepsilon)=
\left\{\begin{aligned}
&\frac{\nu+1}{4(\gamma+1)}-\varepsilon,\quad\nu\in\left(0,\frac{\gamma}{\gamma+2}\right],\\
&\frac{1-\nu}{4}-\varepsilon,\quad\nu\in\left[\frac{\gamma}{\gamma+2},1\right).
\end{aligned}\right.
$$
At the same time, the authors in \cite{CUI2} obtained
$$
||(\rho-\bar{\rho}(\cdot,t))||_{L^{1}}\leq C(1+t)^{k(\epsilon)},
$$
with
$$
k(\epsilon)=\left\{\begin{array}{lll}
-\frac{\gamma^2-\gamma-1-\nu\left(\gamma^2+\gamma-1\right)}{(\gamma+1)(2 \gamma-1)}+\epsilon, & \text { if } \frac{1+\sqrt{5}}{2}<\gamma \leq 2, & 0 \leq \nu<\frac{\gamma^2-\gamma-1}{\gamma^2+\gamma-1}, \\
-\frac{1-\nu(2 \gamma+1)}{(\gamma+1)^2}+\epsilon, & \text { if } \gamma \geq 2, & 0 \leq \nu<\frac{1}{2 \gamma+1} .
\end{array}\right.
$$

In this paper, we intend to study the asymptotic behavior of $L^{\infty}$ weak entropy solutions for the compressible Euler equations with time-dependent damping. By using elaborate energy estimates and relative weak entropies, we will obtain better convergence results for weak entropy solutions of \eqref{1.1} to the corresponding Barenblatt solutions of \eqref{1.3} with the same finite initial mass. We will establish $L^{\gamma}$ estimate for any $\gamma\ge2$ and $0\leq \nu<1$ and $L^{1}$ estimate for any $\gamma>1$ and $0\leq \nu<1$, which are better compared with those in \cite{CUI2,GEN3,HUA4,GEN1}.
Furthermore, it is worth noting that our results present for the first time a uniform form of $L^{1}$ estimate over the entire $(1,+\infty)$ range of $\gamma$.
Indeed, even for $\nu=0$ it is not easy to make some improvements, and as pointed out in \cite{HUA4}, ``the current rate is difficult to improve with the current method". Our results are therefore an encouraging advancement.

The outline of the paper is as follows. In section 2, we give a quick review of some information on Barenblatt’s solution, and state our main result, explaining the basic proof ideas. Then, in section 3, we recall or derive some important lemmas. Section 4 is devoted to extending the related results of \cite{GEN3} about some key intermediate estimates. Finally, we prove our main result in section 5.

\section{Preliminaries and main result }
Throughout this paper, $||\cdot||_{L^{p}}$ stands for $L^{p}(\mathbf{R})(1\le p\le+\infty)$. In particular, we use $\| \cdot\|$ instead of $\| \cdot\|_2$ and denote $\int^{+\infty}_{-\infty}f(x,t)dx=\int f(x,t)dx$.

Since we study the large time behavior of global solutions to the system \eqref{1.1} with vacuum, it is suitable to consider $L^{\infty}$ weak entropy solutions. Next, we state the definition of $L^{\infty}$ weak entropy solutions for the system \eqref{1.1}-\eqref{r0u0} (cf. \cite{CUI2,GEN3}).

\begin{definition}
The functions $(\rho, m)(x, t) \in L^{\infty}$ are called an entropy solution of \eqref{1.1}-\eqref{r0u0}, if for any nonnegative test function $\phi \in \mathcal{D}\left(\mathbf{R} \times \mathbf{R}_{+}\right)$, it holds that
$$
\left\{\begin{array}{ll}
\int_0^t \int_{-\infty}^{+\infty}\left(\rho \phi_t+m \phi_x\right) d x d s+\int_{-\infty}^{+\infty} \rho_0(x) \phi(x, 0) d x=0, \\
\int_0^t \int_{-\infty}^{+\infty}\left\{m \phi_t+\left(\frac{m^2}{\rho}+\rho^\gamma\right) \phi_x-\frac{m}{(1+s)^\lambda} \phi\right\} d x d s+\int_{-\infty}^{+\infty} m_0(x) \phi(x, 0) d x=0, \\
\int_0^t \int_{-\infty}^{+\infty}\left(\eta \phi_t+q \phi_x-\frac{m}{(1+s)^\lambda} \eta \phi\right) d x d s+\int_{-\infty}^{+\infty} \eta(x, 0) \phi(x, 0) d x \geq 0,
\end{array} \right.
$$
where $(\eta, q)$ is any weak convex entropy-flux pair $(\eta(\rho, m), q(\rho, m))$ satisfying
$$
\nabla q=\nabla \eta\nabla f,\,\, f=\left(m,\frac{m^2}{\rho}+\kappa\rho^{\gamma}\right)^{t},\,\,\eta(0,0)=0.
$$

\end{definition}

Next, we recall some results on the entropies available for \eqref{1.1}. According to \cite{LIO}, all weak entropies of \eqref{1.1} are given by the following formula
\begin{equation}\label{etaq}
\begin{aligned}
\eta(\rho, m) & =\int g(\xi) \chi(\xi ; \rho, u) \mathrm{d} \xi=\rho \int_{-1}^1 g\left(u+z \rho^\theta\right)\left(1-z^2\right)^\lambda \mathrm{d} z, \\
q(\rho, m) & =\int g(\xi)(\theta \xi+(1-\theta) u) \chi(\xi ; \rho, u) \mathrm{d} \xi \\
& =\rho \int_{-1}^1 g\left(u+z \rho^\theta\right)\left(u+\theta z \rho^\theta\right)\left(1-z^2\right)^\lambda \mathrm{d} z,
\end{aligned}
\end{equation}
where $m=\rho u, \theta=\frac{\gamma-1}{2}, \lambda=\frac{3-\gamma}{2(\gamma-1)}, g(\xi)$ is any smooth function of $\xi$, and
$$
\chi(\xi ; \rho, u)=\left(\rho^{\gamma-1}-(\xi-u)^2\right)_{+}^\lambda .
$$
The two common choices of $g(\xi)$ are $g(\xi)=\frac{1}{2}|\xi|^2$ and $g(\xi)=|\xi|^{\frac{2\gamma}{\gamma-1}}$.

Then, we give some background information on the Barenblatt solutions and consider
\begin{eqnarray}\label{3.1}
\left\{\begin{array}{ll}
\bar{\rho}_t=(1+t)^{\nu}\left(\bar{\rho}^\gamma\right)_{x x}, \\
\bar{\rho}(x,-1)=M \delta(x), \quad M>0,
\end{array} \right.
\end{eqnarray}
with $0\le\nu<1$ and $\gamma>1$. It is shown in \cite{CUI2, GEN3} that \eqref{3.1} admits a unique solution given by
\begin{equation}\label{barrho}
\bar{\rho}(x, t)=(1+t)^{-\frac{1+\nu}{\gamma+1}}\left(A_0-B_0 x^2 (1+t)^{-\frac{2(1+\nu)}{\gamma+1}}\right)_{+}^{\frac{1}{\gamma-1}},
\end{equation}
where $(f)_{+}=\max \{0, f\}$,
$$
B_0=\frac{(\gamma-1)(1+\nu)}{2\gamma(\gamma+1)}, \quad \text { and } \quad A_{0}^{\frac{\gamma+1}{2(\gamma-1)}}=M \sqrt{B_0}\left(\int_{-1}^1\left(1-y^2\right)^{1 /(\gamma-1)} d y\right)^{-1} \text {. }
$$
As a result, the velocity is given by
\begin{equation}\label{ux}
\bar{u}(x, t)=\frac{1+\nu}{(\gamma+1)(1+t)}x,\,|x| < \sqrt{\frac{A_0}{B_0}}(1+t)^{\frac{1+\nu}{\gamma+1}}.
\end{equation}

Now, we state our main result.

\begin{theorem}\label{thm1}
Suppose that $\rho_0(x) \in L^1(\mathbf{R}) \cap L^{\infty}(\mathbf{R}), u_0(x) \in L^{\infty}(\mathbf{R})$ and
$$
M=\int_{-\infty}^{+\infty} \rho_0(x) d x>0, \quad \rho_0(x) \geq 0 .
$$
Let $1<\gamma<+\infty$, $0\leq\nu<1$, and $(\rho, m)$ be an $L^{\infty}$ entropy solution of the Cauchy problem \eqref{1.1}-\eqref{r0u0}. Let $\bar{\rho}$ be the Barenblatt's solution of porous medium equation \eqref{1.3'} with mass $M$ and $\bar{m}=-\left(\bar{\rho}^\gamma\right)_x$. Define
$$
y(x, t)=-\int_{-\infty}^x(\rho-\bar{\rho})(r, t) d r,
$$
and let $y(x, 0) \in L^2(\mathbf{R})$. If $2\le\gamma<+\infty$, then for any $\varepsilon>0$ and $t>0$,
\begin{equation}\label{1rr+1}
\|(\rho-\bar{\rho})(\cdot, t)\|_{L^\gamma}^\gamma \leq C_\varepsilon(1+t)^{-\mu(\varepsilon)},
\end{equation}
where
\begin{eqnarray}\label{1.5}
\mu(\varepsilon):=\left\{\begin{array}{ll}
\frac{\gamma^2+\gamma-1}{(\gamma+1)^2}(1+\nu)+ \varepsilon,\quad&\nu\in\left[0,\frac{\gamma}{\gamma+2}\right], \\
\frac{2\gamma-1-\nu}{\gamma+1}+ \varepsilon,\quad&\nu\in\left[\frac{\gamma}{\gamma+2},1\right).
\end{array} \right.
\end{eqnarray}
Furthermore, for any $\gamma>1$, it holds that
\begin{equation}\label{1rr+1'}
\begin{aligned}
 \|(\rho-\bar{\rho})(\cdot, t)\|_{L^1} \leq C_\varepsilon(1+t)^{-k(\varepsilon)} ,
\end{aligned}
\end{equation}
with
\begin{eqnarray}\label{1.6}
k(\varepsilon):=\left\{\begin{array}{ll}
\frac{\gamma}{2(\gamma+1)^2}(1+\nu)-\varepsilon,\quad&\nu\in\left[0,\frac{\gamma}{\gamma+2}\right], \\
\frac{\gamma}{2(\gamma+1)}(1-\nu)-\varepsilon,\quad&\nu\in\left[\frac{\gamma}{\gamma+2},1\right).
\end{array} \right.
\end{eqnarray}

\end{theorem}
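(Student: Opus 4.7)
The plan is to use the anti-derivative energy method originating with Hsiao--Liu and refined by Huang--Pan--Wang, with a time-weight bootstrap tailored to the degenerate damping in \eqref{1.1}. Set $y(x,t):=-\int_{-\infty}^{x}(\rho-\bar\rho)(r,t)\,dr$. From \eqref{1.1}$_1$ combined with the time-dependent porous medium equation $\bar\rho_t=(1+t)^\nu(\bar\rho^\gamma)_{xx}$ satisfied by $\bar\rho$ one obtains $y_t=m-\bar m$, and inserting the momentum equation of \eqref{1.1} together with the expression of $\bar m_t$ produces the damped wave-type identity
$$y_{tt}+\frac{\alpha}{(1+t)^\nu}y_t+\Bigl(\frac{m^2}{\rho}\Bigr)_{\!x}+\bigl(p(\rho)-p(\bar\rho)\bigr)_x=\bar m_t-\frac{\alpha}{(1+t)^\nu}\bar m,$$
whose right-hand side is a controlled perturbation because of the explicit self-similar form of $\bar\rho$ in \eqref{barrho}.

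The core step is to test this identity against multipliers of the form $(1+t)^{2\beta}y_t$ and $(1+t)^{2\beta-1+\nu}y$ for suitable $\beta$, combine with the entropy-dissipation inequality generated by the convex entropy $g(\xi)=\tfrac12\xi^2$ in \eqref{etaq}, and iterate the resulting Gronwall estimate a finite number of times (losing an $\varepsilon$ per level). This should yield the relative-pressure bound
$$\int\mathcal{E}(\rho,\bar\rho)(\cdot,t)\,dx\le C_\varepsilon(1+t)^{-\mu(\varepsilon)},\quad\mathcal{E}(\rho,\bar\rho):=\frac{\kappa}{\gamma-1}\bigl[\rho^\gamma-\bar\rho^\gamma-\gamma\bar\rho^{\gamma-1}(\rho-\bar\rho)\bigr],$$
with exactly the $\mu(\varepsilon)$ of \eqref{1.5}; the branch at $\nu=\gamma/(\gamma+2)$ reflects which of the two weight losses (from $\bar m_t$ and from the damping factor $(1+t)^{-\nu}$) dominates. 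For $\gamma\ge 2$, the Taylor expansion $\mathcal{E}=\tfrac12\kappa\gamma\,\xi^{\gamma-2}(\rho-\bar\rho)^2$ with $\xi$ between $\rho$ and $\bar\rho$, combined with the monotonicity $\xi^{\gamma-2}\le(\rho+\bar\rho)^{\gamma-2}$ and direct estimation near the vacuum ($\bar\rho=0$ giving $\mathcal{E}=\tfrac{\kappa}{\gamma-1}\rho^\gamma$), yields $|\rho-\bar\rho|^\gamma\lesssim\mathcal{E}$ pointwise, so \eqref{1rr+1} follows immediately.

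For the $L^1$ bound, valid for all $\gamma>1$, I would use that the Barenblatt profile is supported in $|x|\le R(t):=C(1+t)^{(1+\nu)/(\gamma+1)}$ and, via finite propagation plus the $L^\infty$ decay $\|\rho\|_\infty\lesssim(1+t)^{-(1+\nu)/(\gamma+1)}$, control the tail $\int_{|x|>R(t)}\rho\,dx$, so that Cauchy--Schwarz gives
$$\|\rho-\bar\rho\|_{L^1}\le\sqrt{2R(t)}\,\|\rho-\bar\rho\|_{L^2}+(\text{tail}).$$
Substituting the $L^2$ bound $\|\rho-\bar\rho\|_{L^2}^2\lesssim(1+t)^{-(2\gamma+1)(1+\nu)/(\gamma+1)^2+\varepsilon}$ extracted from the same energy hierarchy then produces the $k(\varepsilon)$ of \eqref{1.6}.

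The main obstacle, and where our argument must differ from \cite{GEN3,CUI2}, is the regime $\gamma<2$. There $\xi^{\gamma-2}$ is singular near the vacuum, so the pointwise control of $(\rho-\bar\rho)^2$ by $\mathcal{E}$ reverses direction and the sharp $L^2$ rate cannot be read directly off $\int\mathcal{E}$. I would circumvent this by using the opposite inequality $\xi^{\gamma-2}\ge(\rho+\bar\rho)^{\gamma-2}$ (valid precisely when $\gamma\le 2$) to obtain
$$(\rho-\bar\rho)^2\le C\,\mathcal{E}(\rho,\bar\rho)\,(\rho+\bar\rho)^{2-\gamma}\le C\|\rho+\bar\rho\|_\infty^{2-\gamma}\,\mathcal{E}(\rho,\bar\rho),$$
and absorbing the singular factor into $\|\rho+\bar\rho\|_\infty\lesssim(1+t)^{-(1+\nu)/(\gamma+1)}$. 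This trades exactly the weight $(1+t)^{-(2-\gamma)(1+\nu)/(\gamma+1)}$ needed so that the interpolation above still produces the unified exponent $k(\varepsilon)$. Closing this rebalancing through the bootstrap requires the sharpened estimates on $y$ and $y_t$ announced for Section 4, which refine those of \cite{GEN3}, and is where I expect the bulk of the technical work and the ``new perspective'' advertised in the abstract to lie.
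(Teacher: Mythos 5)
Your outline of the $L^\gamma$ part (relative mechanical entropy $\mathcal{E}=\frac{\kappa}{\gamma-1}\bigl[\rho^\gamma-\bar\rho^\gamma-\gamma\bar\rho^{\gamma-1}(\rho-\bar\rho)\bigr]$, time-weighted multipliers on the damped wave equation for $y$, and the pointwise bound $|\rho-\bar\rho|^\gamma\lesssim\mathcal{E}$ for $\gamma\ge2$) matches the paper's strategy. The genuine gap is in the $L^1$ step. You propose $\|\rho-\bar\rho\|_{L^1}\le\sqrt{2R(t)}\,\|\rho-\bar\rho\|_{L^2}+(\text{tail})$ together with the \emph{unweighted} bound $\|\rho-\bar\rho\|_{L^2}^2\lesssim(1+t)^{-(2\gamma+1)(1+\nu)/(\gamma+1)^2+\varepsilon}$. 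That $L^2$ bound is not extractable from the energy hierarchy: the hierarchy controls $\int\mathcal{E}\,dx$, and for $\gamma>2$ one has $\mathcal{E}\sim\xi^{\gamma-2}(\rho-\bar\rho)^2$ with $\xi$ between $\rho$ and $\bar\rho$, which degenerates where the densities are small, so $\mathcal{E}$ does \emph{not} dominate $(\rho-\bar\rho)^2$ near the vacuum; H\"older on the support of $\bar\rho$ only returns $\|\rho-\bar\rho\|_{L^1}\lesssim(1+t)^{-(1+\nu)/(\gamma+1)^2+\varepsilon}$, i.e.\ the old rate of \cite{GEN1}, not the claimed improvement. (Indeed your claimed $L^2$ exponent equals $\phi(\varepsilon)+\tfrac{(2-\gamma)(1+\nu)}{\gamma+1}$, which presupposes the inequality $\int(\rho-\bar\rho)^2\le\|\rho+\bar\rho\|_\infty^{2-\gamma}\int\mathcal{E}$ — wrong-signed for $\gamma>2$.) For $\gamma<2$ your rebalancing invokes $\|\rho+\bar\rho\|_{L^\infty}\lesssim(1+t)^{-(1+\nu)/(\gamma+1)}$; this requires sup-norm decay of the weak entropy solution $\rho$ itself, whereas Theorem \ref{thm2} (invariant regions) only yields $0\le\rho\le C$ and no such decay is known in the $L^\infty$ framework with vacuum. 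Finally, the tail control by finite propagation is circular: the support of $\rho$ may spread at speed $O(1)$, far beyond $|x|\le R(t)\sim(1+t)^{(1+\nu)/(\gamma+1)}$, and bounding the mass outside the Barenblatt support is part of what must be proved.

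The paper sidesteps all three issues by placing the weights on the explicit profile $\bar\rho$ rather than on $\rho$. For $\gamma\ge2$ it writes
$$\int|\rho-\bar\rho|\,dx\le\Bigl(\int\bar\rho^{\gamma-2}|\rho-\bar\rho|^2dx\Bigr)^{1/2}\Bigl(\int\bar\rho^{2-\gamma}dx\Bigr)^{1/2},$$
using $\bar\rho^{\gamma-2}|\rho-\bar\rho|^2\lesssim\mathcal{E}$ and $\int\bar\rho^{2-\gamma}dx\sim(1+t)^{(\gamma-1)(1+\nu)/(\gamma+1)}$ computed from \eqref{barrho}. For $1<\gamma<2$ it splits $\mathbf{R}$ into the set where $\rho=0$, $\bar\rho=0$ or $|\rho-\bar\rho|\ge\frac12\bar\rho$ (there $\mathcal{E}\gtrsim|\rho-\bar\rho|^\gamma$, and one applies H\"older with exponent $\gamma$ and the weight $\bar\rho^{\pm\delta}$) and its complement, where $\rho\sim\bar\rho$ and Taylor's formula gives $\mathcal{E}\gtrsim\bar\rho^{\gamma-2}|\rho-\bar\rho|^2$ (Lemma \ref{lem33}); both pieces then close with the same $\phi(\varepsilon)$. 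To reach the stated $k(\varepsilon)$ you would need to replace your $L^2$-plus-support interpolation by such a $\bar\rho$-weighted Cauchy--Schwarz/region decomposition, or else prove the sup-norm decay of $\rho$, which is a substantially harder problem.
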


\begin{remark}{\rm
It is shown in \cite{CUI2} that
$$
||(\rho-\bar{\rho}(\cdot,t))||_{L^{1}}\leq C(1+t)^{k_1(\epsilon)},
$$
with
$$
k_1(\epsilon):=\left\{\begin{array}{lll}
-\frac{\gamma^2-\gamma-1-\nu\left(\gamma^2+\gamma-1\right)}{(\gamma+1)(2 \gamma-1)}+\epsilon, & \text { if } \frac{1+\sqrt{5}}{2}<\gamma \leq 2, & 0 \leq \nu<\frac{\gamma^2-\gamma-1}{\gamma^2+\gamma-1}, \\
-\frac{1-\nu(2 \gamma+1)}{(\gamma+1)^2}+\epsilon, & \text { if } \gamma \geq 2, & 0 \leq \nu<\frac{1}{2 \gamma+1} ,
\end{array}\right.
$$
and in \cite{GEN3} for $1<\gamma<3$ that
$$
||(\rho-\bar{\rho}(\cdot,t))||_{L^{1}}\leq C(1+t)^{-\alpha_1(\varepsilon)},
$$
with
$$
\alpha_1(\varepsilon)=
\left\{\begin{aligned}
&\frac{\nu+1}{4(\gamma+1)}-\varepsilon,\quad\nu\in\left(0,\frac{\gamma}{\gamma+2}\right],\\
&\frac{1-\nu}{4}-\varepsilon,\quad\nu\in\left[\frac{\gamma}{\gamma+2},1\right).
\end{aligned}\right.
$$
It is easy to check that $\alpha_1(\varepsilon)<k_1(\varepsilon)$,
$$\frac{\gamma^2-\gamma-1}{\gamma^2+\gamma-1}<\frac{\gamma}{\gamma+2},\, ~\mbox{and}\quad \frac{1}{2 \gamma+1}<\frac{\gamma}{\gamma+2},$$
which gives
$$
k_1(\epsilon)<\frac{\gamma}{2(\gamma+1)^2}(1+\nu)+ \varepsilon.
$$
This means that the $L^1$ convergence rates in Theorem \ref{thm1} are better. Furthermore, it is worth mentioning that the $L^1$ convergence estimate in Theorem \ref{thm1} is valid for any $1<\gamma<+\infty$.}

\end{remark}

\begin{remark}
When $\nu=0$, it is shown in \cite{GEN1} for $\gamma\ge2$ and \cite{HUA4} for $1<\gamma<3$ that $\|(\rho-\bar{\rho})(\cdot, t)\|_{L^1}\leq C(1+t)^{-\frac{1}{(\gamma+1)^2}+\varepsilon}$ and $\|(\rho-\bar{\rho})(\cdot, t)\|_{L^1}\leq C(1+t)^{-\frac{1}{4(\gamma+1)}+\varepsilon}$ respectively. It is easy to check that $\frac{\gamma}{2(\gamma+1)^2}>\frac{1}{4(\gamma+1)}$ when $\gamma>1$ and $\frac{\gamma}{2(\gamma+1)^2}>\frac{1}{(\gamma+1)^2}$ when $\gamma\ge2$, which means that the decay rates in Theorem \ref{thm1} are better than the previous ones.
\end{remark}
Now, we explain the basic proof ideas. We will follow the strategy in \cite{CUI2, GEN3}, employing the entropy inequalities and time-weighted entropy estimates with specific index. An important part of the proof process is to extend key intermediate estimates from \cite[Lemmas 3.8 and 3.9]{GEN3} (with $1<\gamma<3$) to $1<\gamma<+\infty$ (Theorem \ref{thm3}). This will be realized by Taylor's formula, the energy inequality and in-depth analysis of $\eta$ (defined in Lemma \ref{lem41}).

For $\gamma\ge2$, we choose the mechanical energy(see \cite{CUI2, GEN3}),
$$
\eta=\frac{1}{2}\frac{m^2}{\rho}+\frac{\kappa}{\gamma-1}\rho^{\gamma},
$$
which measures the $L^\gamma$ norm while the entropy inequality measures the $L^{\gamma+1}$ norm in density. To overcome the mismatch between the exponents, we find the relationship between
$$
\rho^{\gamma+1}-\bar{\rho}^{\gamma+1}-(\gamma+1)\bar{\rho}^{\gamma}(\rho-\bar{\rho})\,\,\mbox{and} \,\,\rho^{\gamma}-\bar{\rho}^{\gamma}-\gamma\bar{\rho}^{\gamma-1}(\rho-\bar{\rho});
$$
see Lemma \ref{lem31}. As a result, we can obtain the decay rates of the relative weak entropy $\eta_*$ (Theorem \ref{thm4}), by making use of the relative entropy inequality and time-weighted entropy estimates.

For the $L^1$ estimate with $1<\gamma<+\infty$, the case $\gamma\ge2$ can be handled by Lemma \ref{lem32}. However, for $1<\gamma<2$,
$$
C|\rho-\bar{\rho}|^{2}\leq\rho^{\gamma}-\bar{\rho}^{\gamma}-\gamma\bar{\rho}^{\gamma-1}(\rho-\bar{\rho})\le C'|\rho-\bar{\rho}|^{\gamma},
$$
which means that Lemma \ref{lem32} is not valid, and the index 2 of $|\rho-\bar{\rho}|^{2}$ is not suitable. Hence, we should seek suitable lower bound of
$$\rho^{\gamma}-\bar{\rho}^{\gamma}-\gamma\bar{\rho}^{\gamma-1}(\rho-\bar{\rho}),$$
which is quite difficult. By a careful decomposition of the region, we are able to overcome this difficulty; see Lemma \ref{lem33} below.

\section{Some lemmas and priori estimates}
In this section, we state or prove some auxiliary results, which are important for proving our main result.
\begin{lemma}\label{lem31}
Let $1<\gamma<+\infty$, and $0\leq \rho,\,\bar{\rho}\leq C$. Then, there exists a constant $c>0$, such that
\begin{equation}\label{gam1}
\rho^{\gamma+1}-\bar{\rho}^{\gamma+1}-(\gamma+1)\bar{\rho}^{\gamma}(\rho-\bar{\rho})\ge c(\rho^{\gamma}-\bar{\rho}^{\gamma}-\gamma\bar{\rho}^{\gamma-1}(\rho-\bar{\rho}))^{\frac{\gamma+1}{\gamma}}.
\end{equation}
\end{lemma}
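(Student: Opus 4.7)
The plan is to exploit the fact that both sides of \eqref{gam1} are homogeneous of degree $\gamma+1$ in $(\rho,\bar\rho)$, which reduces the inequality to a one-variable asymptotic comparison. Writing
\[
L(\rho,\bar\rho):=\rho^{\gamma+1}-\bar\rho^{\gamma+1}-(\gamma+1)\bar\rho^{\gamma}(\rho-\bar\rho),\qquad R(\rho,\bar\rho):=\rho^{\gamma}-\bar\rho^{\gamma}-\gamma\bar\rho^{\gamma-1}(\rho-\bar\rho),
\]
one checks immediately that $L$ is homogeneous of degree $\gamma+1$ in $(\rho,\bar\rho)$ while $R$ is homogeneous of degree $\gamma$, so $R^{(\gamma+1)/\gamma}$ is also of degree $\gamma+1$. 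Thus I would first dispose of the degenerate case $\bar\rho=0$, where $L=\rho^{\gamma+1}=R^{(\gamma+1)/\gamma}$ holds directly, and then for $\bar\rho>0$ set $t:=\rho/\bar\rho\ge 0$ and factor out $\bar\rho^{\gamma+1}$ to reduce \eqref{gam1} to the scalar inequality
\[
A(t):=t^{\gamma+1}-1-(\gamma+1)(t-1)\ \ge\ c\bigl(B(t)\bigr)^{(\gamma+1)/\gamma},\qquad B(t):=t^{\gamma}-1-\gamma(t-1),
\]
for all $t\ge 0$, with $c>0$ depending only on $\gamma$.

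The next step is to study the ratio $\phi(t):=A(t)/B(t)^{(\gamma+1)/\gamma}$. Strict convexity of $s\mapsto s^{\gamma+1}$ and $s\mapsto s^{\gamma}$ on $[0,\infty)$ guarantees $A(t),B(t)\ge 0$ with equality only at $t=1$, so $\phi$ is continuous and strictly positive on $[0,\infty)\setminus\{1\}$. It then suffices to control $\phi$ in the three asymptotic regimes $t\to 1$, $t\to\infty$, and $t=0$. Near $t=1$, Taylor's formula gives
\[
A(t)=\frac{\gamma(\gamma+1)}{2}(t-1)^2+O(|t-1|^3),\qquad B(t)=\frac{\gamma(\gamma-1)}{2}(t-1)^2+O(|t-1|^3),
\]
so $B(t)^{(\gamma+1)/\gamma}$ is of order $|t-1|^{2(\gamma+1)/\gamma}$; because $2(\gamma+1)/\gamma>2$ for every $\gamma\in(1,\infty)$, this forces $\phi(t)\to+\infty$. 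As $t\to+\infty$ one has $A(t)\sim t^{\gamma+1}$ and $B(t)^{(\gamma+1)/\gamma}\sim t^{\gamma+1}$, so $\phi(t)\to 1$. Finally, at $t=0$ direct evaluation yields $A(0)=\gamma$ and $B(0)=\gamma-1$, so $\phi(0)=\gamma/(\gamma-1)^{(\gamma+1)/\gamma}>0$. Combining these three pieces with continuity on the compact set obtained after cutting out a small neighborhood of $t=1$ (where $\phi$ is already large), $\phi$ admits a strictly positive infimum on $[0,\infty)\setminus\{1\}$, which is the required constant $c$.

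The main subtlety, and the reason the author singles out the case $1<\gamma<2$, sits in the comparison between the quadratic leading term of $B$ at $t=1$ and its power $(\gamma+1)/\gamma$; the matching quadratic term of $A$ only dominates because of the strict gap $2(\gamma+1)/\gamma>2$, which gives control arbitrarily close to $t=1$. Away from $t=1$ the inequality is essentially a tight matching of growth orders, and there is nothing tricky. In particular the upper bound $\rho,\bar\rho\le C$ assumed in the statement plays no essential role: by homogeneity the constant $c$ obtained above depends only on $\gamma$, so the bound \eqref{gam1} in fact holds on the whole quadrant $\rho,\bar\rho\ge 0$.
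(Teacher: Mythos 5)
Your proof is correct. It follows the same overall strategy as the paper --- reduce by homogeneity to the single variable $t=\rho/\bar\rho$ and then obtain a positive lower bound from the three regimes $t\to 1$, $t\to 0^+$, $t\to+\infty$ together with continuity and compactness in between --- but you implement it more directly: you bound the ratio $\phi(t)=A(t)/B(t)^{(\gamma+1)/\gamma}$ of the two sides itself, whereas the paper differentiates $f(x)=A(x)-cB(x)^{(\gamma+1)/\gamma}$ and proves a sign condition on $f'$ via the auxiliary ratio $h(x)=(x^\gamma-1)/\bigl([x^\gamma-1-\gamma(x-1)]^{1/\gamma}(x^{\gamma-1}-1)\bigr)$, for which it carries out exactly the same three limit computations. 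Your route buys a cleaner argument (no differentiation, no monotonicity claim, and the blow-up of $\phi$ near $t=1$ is justified quantitatively by the Taylor expansions and the exponent gap $2(\gamma+1)/\gamma>2$, rather than only through the divergence of $h$), and it makes transparent that the constant depends only on $\gamma$ so the hypothesis $\rho,\bar\rho\le C$ is not needed --- a point the paper's proof also implicitly contains but does not state. One small side-remark of yours is off: the case split $1<\gamma<2$ is not made in this lemma at all; it is singled out later (Lemma \ref{lem33}) for the unrelated purpose of lower-bounding $\rho^\gamma-\bar\rho^\gamma-\gamma\bar\rho^{\gamma-1}(\rho-\bar\rho)$ by a power of $|\rho-\bar\rho|$, not because of any subtlety at $t=1$ here. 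This does not affect the validity of your proof.
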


\begin{proof}
When $\bar{\rho}=0$, \eqref{gam1} is true for any $0<c \leq 1$. For $\rho=0$, \eqref{gam1} becomes $\gamma \bar{\rho}^{\gamma+1} \geq$ $c(\gamma-1)^{\frac{\gamma+1}{\gamma}} \bar{\rho}^{\gamma+1}$, and it is equivalent to proving that $\frac{\gamma}{(\gamma-1)^{\frac{\gamma+1}{\gamma}}}$, has a positive lower bound, which is guaranteed by $\gamma<+\infty$.

Now, assume $\bar{\rho} \neq 0, \rho \neq 0$. Then $\bar{\rho}>0$. Notice that \eqref{gam1} is equivalent to
$$
\left(\frac{\rho}{\bar{\rho}}\right)^{\gamma+1}-1-(\gamma+1)\left(\frac{\rho}{\bar{\rho}}-1\right)\ge c\left[\left(\frac{\rho}{\bar{\rho}}\right)^{\gamma}-1-\gamma\left(\frac{\rho}{\bar{\rho}}-1\right)\right]^{\frac{\gamma+1}{\gamma}}.
$$
Set $\frac{\rho}{\bar{\rho}}=x$ and
$$
f(x)=x^{\gamma+1}-1-(\gamma+1)(x-1)-c\left[x^{\gamma}-1-\gamma(x-1)\right]^{\frac{\gamma+1}{\gamma}};
$$
it is obvious that $f(1)=0$ and
\begin{align*}
\frac{f'(x)}{\gamma+1}&=x^{\gamma}-1-\frac{c}{\gamma}\left[x^{\gamma}-1-\gamma(x-1)\right]^{\frac{1}{\gamma}}
\left(\gamma x^{\gamma-1}-\gamma\right)\\
&=x^{\gamma}-1-c\left[x^{\gamma}-1-\gamma(x-1)\right]^{\frac{1}{\gamma}}\left( x^{\gamma-1}-1\right).
\end{align*}

Next, we claim that there exists $0<c\le 1$, such that $f'(x)\le0, \forall x\in (0,1)$, and $f'(x)\ge0, \forall x\in (1,+\infty)$. That is,
\begin{align*}
f(x)\ge f(1)=0,\quad \forall x\in[0,+\infty),
\end{align*}
which will conclude the proof. To this end, we set
$$
h(x)=\frac{x^{\gamma}-1}{\left[x^{\gamma}-1-\gamma(x-1)\right]^{\frac{1}{\gamma}}\left( x^{\gamma-1}-1\right)},
$$
and divide the proof into two cases.

\vspace{4pt}
\noindent\textbf{$Case~ A: x\in (1,+\infty).$}

Observing $\left[(1+t)^{k}-1\right]\sim k t ~(t\rightarrow 0),$ where $k>0$ is a constant, we find that
\begin{align*}
\lim_{x\rightarrow 1^{+}}\frac{x^{\gamma}-1}{\left[x^{\gamma}-1-\gamma(x-1)\right]^{\frac{1}{\gamma}}\left( x^{\gamma-1}-1\right)}=\lim_{x\rightarrow 1^{+}}\frac{\gamma x}{\left[x^{\gamma}-1-\gamma(x-1)\right]^{\frac{1}{\gamma}}(\gamma-1) x}=+\infty,
\end{align*}
due to
$$
\left[x^{\gamma}-1-\gamma(x-1)\right]^{\frac{1}{\gamma}}\rightarrow 0, \quad \mbox{as}~\,x\rightarrow 1^{+}.
$$
In addition,
\begin{align*}
\lim_{x\rightarrow +\infty}h(x)=\lim_{x\rightarrow +\infty}\frac{1-\frac{1}{x^{\gamma}}}{\left[1-\frac{1}{x^{\gamma}}-\gamma\left (\frac{1}{x^{\gamma-1}}-\frac{1}{x^{\gamma}}\right)\right]^{\frac{1}
{\gamma}}{\left( 1-\frac{1}{x^{\gamma-1}}\right)}}=1.
\end{align*}
Hence, there exist $0<\delta<1$ and $X>2$ such that, when $x\in(1,1+\delta)$ or $x>X$,
$$
h(x)\ge\frac{1}{2}.
$$
When $x\in[1+\delta, X]$, $\min_{x\in[1+\delta, X]}h(x)=h(x_0)>0,$ since $h(x)$ is a continuous function.

In conclusion, for any $x\in(1,+\infty)$, $h(x)\ge\min\left\{h(x_0),\frac{1}{2}\right\}.$

\vspace{4pt}
\noindent\textbf{$Case ~B: x\in (0,1).$}

In this case, $x^{\gamma-1}-1<0$,
\begin{align*}
\lim_{x\rightarrow 1^{-}}\frac{x^{\gamma}-1}{[x^{\gamma}-1-\gamma(x-1)]^{\frac{1}{\gamma}}( x^{\gamma-1}-1)}=\lim_{x\rightarrow 1^{-}}\frac{\gamma x}{[x^{\gamma}-1-\gamma(x-1)]^{\frac{1}{\gamma}}(\gamma-1) x}=+\infty,
\end{align*}
and
\begin{align*}
\lim_{x\rightarrow 0^+}\frac{x^{\gamma}-1}{[x^{\gamma}-1-\gamma(x-1)]^{\frac{1}{\gamma}}( x^{\gamma-1}-1)}=(\gamma-1)^{-\frac{1}{\gamma}}.
\end{align*}
Hence, there exists $0<\delta_1<\frac{1}{4}$, such that when $x\in(1-\delta_1, 1)$ or $x\in(0, \delta_1)$,
$$
h(x)\ge\frac{1}{2}(\gamma-1)^{-\frac{1}{\gamma}}.
$$
If $x\in[\delta_1, 1-\delta_1]$, we have, similarly to $Case~ A$,
$$
\min_{x\in[\delta_1, 1-\delta_1]}h(x)=h(x_1)>0,
$$
and
$$
h(x)\ge\min\left\{h(x_1),\frac{1}{2}(\gamma-1)^{-\frac{1}{\gamma}}\right\}.
$$

Consequently, we deduce
$$
h(x)\ge\min\left\{h(x_0),h(x_1), \frac{1}{2},\frac{1}{2}(\gamma-1)^{-\frac{1}{\gamma}}\right\}>0.
$$
Hence, noting $x^{\gamma}-1-\gamma(x-1)>0, \, \forall x\neq1$, and letting
$$
c=\frac{1}{2}\min\left\{\frac{\gamma}{(\gamma-1)^{\frac{\gamma+1}{\gamma}}},h(x_0),h(x_1), \frac{1}{2},\frac{1}{2}(\gamma-1)^{-\frac{1}{\gamma}}\right\},
$$
justifies the claim and so completes the proof.

\end{proof}

\begin{lemma}\label{lem32}\cite{HUA2,HUA4}
Assume $0 \leq \rho, \bar{\rho} \leq C$ and $\gamma>1$. Then there exist two positive constants $d_1$ and $d_2$ such that
$$
\left\{\begin{aligned}
\left(\rho^\gamma-\bar{\rho}^\gamma\right)(\rho-\bar{\rho})&\ge |\rho-\bar{\rho}|^{\gamma+1},\\
d_1\left(\rho^{\gamma-1}+\bar{\rho}^{\gamma-1}\right)(\rho-\bar{\rho})^2 & \leq \rho^{\gamma+1}-\bar{\rho}^{\gamma+1}-(\gamma+1) \bar{\rho}^\gamma(\rho-\bar{\rho}) \\
& \leq d_2\left(\rho^{\gamma-1}+\bar{\rho}^{\gamma-1}\right)(\rho-\bar{\rho})^2, \\
d_1\left(\rho^{\gamma-1}+\bar{\rho}^{\gamma-1}\right)(\rho-\bar{\rho})^2 & \leq\left(\rho^\gamma-\bar{\rho}^\gamma\right)(\rho-\bar{\rho}) \\
& \leq d_2\left(\rho^{\gamma-1}+\bar{\rho}^{\gamma-1}\right)(\rho-\bar{\rho})^2 .
\end{aligned}\right.
$$
\end{lemma}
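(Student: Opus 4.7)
The plan is to prove all three bounds in Lemma~\ref{lem32} by reducing each to a one-dimensional integral estimate via Taylor's formula with integral remainder, and then controlling that integral by $\rho^{\gamma-1}+\bar\rho^{\gamma-1}$ from above and below. By symmetry of the relevant quantities, for the first and third inequalities I may assume $\rho \geq \bar\rho \geq 0$, and the trivial case $\rho=\bar\rho=0$ is set aside. For the second inequality the middle expression is not symmetric, but the integral representation below works regardless of the ordering; I would simply argue for each ordering separately.

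For the first inequality $(\rho^\gamma - \bar\rho^\gamma)(\rho-\bar\rho) \geq |\rho-\bar\rho|^{\gamma+1}$, dividing through by $\rho^{\gamma+1}$ and setting $s=\bar\rho/\rho \in [0,1]$ reduces the claim to $(1-s^\gamma)(1-s)\geq (1-s)^{\gamma+1}$, i.e.\ $1-s^\gamma \geq (1-s)^\gamma$. This is the familiar super-additivity inequality $a^\gamma+b^\gamma \leq (a+b)^\gamma$ for $a,b\geq 0$ and $\gamma>1$, applied to $a=1-s$, $b=s$.

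For the two bracketed inequalities, the key is the integral-remainder identities
\[
\rho^{\gamma+1} - \bar\rho^{\gamma+1} - (\gamma+1)\bar\rho^\gamma(\rho-\bar\rho) = \gamma(\gamma+1)(\rho-\bar\rho)^2 \int_0^1 (1-s)\,\xi(s)^{\gamma-1}\,ds,
\]
\[
(\rho^\gamma - \bar\rho^\gamma)(\rho-\bar\rho) = \gamma(\rho-\bar\rho)^2 \int_0^1 \xi(s)^{\gamma-1}\,ds,
\]
with $\xi(s) = \bar\rho + s(\rho-\bar\rho)$. Assuming $\rho \geq \bar\rho$, the upper bounds follow at once from monotonicity, $\xi(s)^{\gamma-1} \leq \rho^{\gamma-1} \leq \rho^{\gamma-1}+\bar\rho^{\gamma-1}$.

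The main obstacle is the lower bound on these integrals, because for $1<\gamma<2$ the map $x\mapsto x^{\gamma-1}$ is concave, so $\xi(s)^{\gamma-1}$ can be much smaller than $\rho^{\gamma-1}$ on the part of $[0,1]$ where $s$ is near $0$ and $\bar\rho$ is small; in particular, neither endpoint value $\xi(0)^{\gamma-1}=\bar\rho^{\gamma-1}$ nor an MVT value automatically dominates $\tfrac{1}{2}(\rho^{\gamma-1}+\bar\rho^{\gamma-1})$. The fix is to localize: on $s\in[1/2,1]$, monotonicity of $\xi$ forces $\xi(s) \geq (\rho+\bar\rho)/2 \geq \rho/2$, producing $\int_0^1 (1-s)\xi(s)^{\gamma-1}\,ds \geq c_\gamma \rho^{\gamma-1}$ with an explicit constant; and since the ordering $\rho\geq\bar\rho$ also gives $\rho^{\gamma-1}+\bar\rho^{\gamma-1} \leq 2\rho^{\gamma-1}$, we recover the desired $c(\rho^{\gamma-1}+\bar\rho^{\gamma-1})(\rho-\bar\rho)^2$ lower bound for both integrals. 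The case $\rho<\bar\rho$ is symmetric, with the active interval becoming $s\in[0,1/2]$. The entire argument is elementary, and its details are carried out in \cite{HUA2,HUA4}.
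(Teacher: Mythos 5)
Your argument is correct. Note that the paper itself gives no proof of Lemma \ref{lem32}; it is quoted from \cite{HUA2,HUA4}, so there is no in-text argument to compare against. Your write-up is a valid self-contained derivation: the reduction of the first inequality to $1-s^{\gamma}\ge (1-s)^{\gamma}$ via superadditivity of $x\mapsto x^{\gamma}$ is standard and sound; the two Taylor/integral-remainder identities with $\xi(s)=\bar\rho+s(\rho-\bar\rho)$ are exact; the upper bounds follow from $\xi(s)^{\gamma-1}\le\max\{\rho,\bar\rho\}^{\gamma-1}\le\rho^{\gamma-1}+\bar\rho^{\gamma-1}$; and you correctly identify the only delicate point, namely the lower bound when $1<\gamma<2$, where concavity of $x^{\gamma-1}$ prevents a naive endpoint or MVT bound. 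Your localization to the half of $[0,1]$ on which $\xi(s)\ge\frac{1}{2}\max\{\rho,\bar\rho\}$, combined with $\max\{\rho,\bar\rho\}^{\gamma-1}\ge\frac{1}{2}(\rho^{\gamma-1}+\bar\rho^{\gamma-1})$, closes this correctly for both orderings (the weight $1-s$ has positive integral over either half-interval), and you rightly treat the non-symmetric middle expression by running the argument for each ordering rather than invoking symmetry. No gaps.
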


From the above lemma, we see
$$
\rho^{\gamma}-\bar{\rho}^{\gamma}-\gamma\bar{\rho}^{\gamma-1}(\rho-\bar{\rho})\ge C|\rho-\bar{\rho}|^{\gamma},
$$
only holds for $\gamma\ge2$. Hence, we give a lemma to roughly describe the lower bound of $\rho^{\gamma}-\bar{\rho}^{\gamma}-\gamma\bar{\rho}^{\gamma-1}(\rho-\bar{\rho})$ for $\gamma\in(1,2)$, which is essentially important in the proof of \eqref{1rr+1'}.

\begin{lemma}\label{lem33}
Let $1<\gamma<2$, and $0\leq \rho,\,\bar{\rho}\leq C$. Then, there exist two positive constants $c_1 $ and $c_2$, such that\\
\noindent{(1).} If $\rho=0$, or $\bar{\rho}=0$, or $|\rho-\bar{\rho}|\ge\frac{1}{2}\bar{\rho}$, then
\begin{equation}\label{rho0}
c_1|\rho-\bar{\rho}|^{\gamma}\leq\rho^{\gamma}-\bar{\rho}^{\gamma}-\gamma\bar{\rho}^{\gamma-1}(\rho-\bar{\rho}).
\end{equation}

\noindent{(2).} If $\rho\neq 0$, $\bar{\rho}\neq 0$ and $|\rho-\bar{\rho}|<\frac{1}{2}\bar{\rho}$, then
\begin{equation}\label{rho01/2}
c_2\bar{\rho}^{\gamma-2}|\rho-\bar{\rho}|^{\gamma}\leq\rho^{\gamma}-\bar{\rho}^{\gamma}-\gamma\bar{\rho}^{\gamma-1}(\rho-\bar{\rho}).
\end{equation}
\end{lemma}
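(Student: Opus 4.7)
The plan is to split the argument along the dichotomy stated in the lemma. In case (1) the desired inequality will follow from an elementary one-dimensional analysis after scaling by $\bar\rho$, while in case (2) the natural tool is a second-order Taylor expansion, available precisely because $\rho$ and $\bar\rho$ are comparable there. I expect the delicate step to come at the end of case (2), where one must reconcile the natural Taylor remainder with the exact form stated in \eqref{rho01/2}.

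For case (1), I would first dispose of the two degenerate subcases by direct substitution: when $\rho=0$ the inequality reads $c_1\bar\rho^\gamma\le(\gamma-1)\bar\rho^\gamma$, so any $c_1\le\gamma-1$ works, and when $\bar\rho=0$ it reduces to $c_1\rho^\gamma\le\rho^\gamma$. For the remaining subcase $\rho,\bar\rho>0$ with $|\rho-\bar\rho|\ge\bar\rho/2$, I set $t=\rho/\bar\rho$ and divide by $\bar\rho^\gamma$ to obtain the equivalent one-variable inequality
$$c_1|t-1|^\gamma\le\Phi(t):=t^\gamma-1-\gamma(t-1),\qquad t\in[0,\tfrac12]\cup[\tfrac32,+\infty).$$
Strict convexity of $t\mapsto t^\gamma$ makes $\Phi>0$ off $t=1$, so the ratio $\Phi(t)/|t-1|^\gamma$ is continuous and strictly positive on both intervals. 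It attains a positive minimum on the compact $[0,\tfrac12]$, and on $[\tfrac32,+\infty)$ it tends to $1$ as $t\to+\infty$ (since $\Phi(t)\sim t^\gamma\sim(t-1)^\gamma$), so it is bounded below there too. Taking $c_1$ to be the minimum over all three subcases closes (1).

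For case (2), the hypothesis $|\rho-\bar\rho|<\bar\rho/2$ forces $\bar\rho+s(\rho-\bar\rho)\in(\bar\rho/2,3\bar\rho/2)$ for every $s\in[0,1]$. Taylor's theorem with integral remainder then gives
$$\rho^\gamma-\bar\rho^\gamma-\gamma\bar\rho^{\gamma-1}(\rho-\bar\rho)=\gamma(\gamma-1)(\rho-\bar\rho)^2\int_0^1(1-s)\bigl[\bar\rho+s(\rho-\bar\rho)\bigr]^{\gamma-2}\,ds,$$
and since $\gamma-2<0$ the integrand is bounded below by $(3\bar\rho/2)^{\gamma-2}$, so the right-hand side exceeds $c\,\bar\rho^{\gamma-2}(\rho-\bar\rho)^2$ for an explicit $c=c(\gamma)>0$. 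The main obstacle is to pass from this natural Taylor bound to the form $c_2\bar\rho^{\gamma-2}|\rho-\bar\rho|^\gamma$ appearing in \eqref{rho01/2}: since $\gamma<2$, a pointwise exchange of $(\rho-\bar\rho)^2$ for $|\rho-\bar\rho|^\gamma$ costs a factor $|\rho-\bar\rho|^{2-\gamma}$, which can be small when $|\rho-\bar\rho|$ is small. This last step is where the fine decomposition advertised just before the lemma must be used, combining the uniform bound $\rho,\bar\rho\le C$ with the constraint $|\rho-\bar\rho|<\bar\rho/2$ so as to absorb the discrepancy into the constant $c_2$; it is the part of the proof I would treat with the most care.
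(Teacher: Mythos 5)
Your proof of part (1) is correct and essentially matches the paper's: after disposing of the degenerate subcases, both you and the paper rescale by $\bar\rho$, reduce to positivity of $\Phi(t)/|t-1|^\gamma$ on $[0,\tfrac12]\cup[\tfrac32,\infty)$, and combine compactness with the limits at $0^+$ and $+\infty$. (You observe that $[0,\tfrac12]$ is already compact, avoiding the paper's additional subdivision near the endpoints; this is a slight streamlining, not a different route.)

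For part (2) you are right to be uneasy, and in fact your suspicion points at a genuine typo in the statement rather than a gap in your argument. Your Taylor computation (integral remainder, intermediate point in $(\bar\rho/2,3\bar\rho/2)$, integrand bounded below by $(3\bar\rho/2)^{\gamma-2}$) establishes
\[
\rho^{\gamma}-\bar{\rho}^{\gamma}-\gamma\bar{\rho}^{\gamma-1}(\rho-\bar{\rho})\;\ge\; c\,\bar{\rho}^{\gamma-2}|\rho-\bar{\rho}|^{2},
\]
which is exactly what the paper's own proof establishes (with the Lagrange form of the remainder) and exactly what is used later: in Case~B, region $\Omega_2$, of the proof of the main theorem, the paper applies Cauchy--Schwarz with the weight $\bar\rho^{\gamma-2}|\rho-\bar\rho|^{2}$, not $|\rho-\bar\rho|^{\gamma}$. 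The displayed inequality \eqref{rho01/2} with exponent $\gamma$ cannot be correct as written: since $1<\gamma<2$ and $|\rho-\bar\rho|$ may be arbitrarily small on $\Omega_2$, one would need $c_2\le C'\,|\rho-\bar\rho|^{2-\gamma}$ uniformly, which fails as $\rho\to\bar\rho$. So the exponent $\gamma$ in \eqref{rho01/2} should read $2$, and the ``delicate step'' you flagged simply disappears — your argument already proves the corrected statement, and no further decomposition is needed.
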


\begin{proof}
To begin with, we prove \eqref{rho0}. When $\rho=0$, \eqref{rho0} is true for any $0<c_1\le \gamma-1$. When $\bar{\rho}=0$, \eqref{rho0} is true for any $0<c_1\le1$. Now, assume $\rho\neq 0$, $\bar{\rho}\neq 0$. Hence, $\bar{\rho}>0$. Notice \eqref{rho0} is equivalent to
$$
c_1\left|\frac{\rho}{\bar{\rho}}-1\right|^{\gamma}\leq\left(\frac{\rho}{\bar{\rho}}\right)^{\gamma}-1-\gamma\left(\frac{\rho}{\bar{\rho}}-1\right).
$$
Set $\frac{\rho}{\bar{\rho}}=x$ and
$$
f(x)=\frac{x^\gamma-1-\gamma(x-1)}{|x-1|^{\gamma}}.
$$
It is obvious that $x\ge\frac{3}{2}$ or $0<x\le\frac{1}{2}$ by $|\rho-\bar{\rho}|\ge\frac{1}{2}\bar{\rho}$ and $\rho\neq 0$, $\bar{\rho}\neq 0$. Then, we divide into two cases.

\vspace{4pt}
\noindent\textbf{$Case~ I: x\ge\frac{3}{2}.$}  Since
\begin{align*}
\lim_{x\rightarrow +\infty}f(x)=\lim_{x\rightarrow +\infty}\frac{x^\gamma-1-\gamma(x-1)}{|x-1|^{\gamma}}=1,
\end{align*}
there exists $X>2$ such that
$
f(x)\ge\frac{1}{2}
$
for $x>X$. When $x\in\left[\frac{3}{2}, X\right]$,
$$\min_{x\in[3/2, X]}f(x)=f(x_0)>0,$$
since $f(x)$ is a continuous function.

As a result, for any $ x\ge\frac{3}{2}$, $f(x)\ge \min\left\{f(x_0), \frac{1}{2}\right\}$.

\vspace{4pt}
\noindent\textbf{$Case ~II: 0<x\le\frac{1}{2}.$}
In this case,
\begin{align*}
\lim_{x\rightarrow 0^+}f(x)=\lim_{x\rightarrow 0^+}\frac{x^\gamma-1-\gamma(x-1)}{|x-1|^{\gamma}}=\gamma-1.
\end{align*}
Hence, there exists $0<\delta_1<\frac{1}{4}$ such that
$$f(x)\ge \frac{\gamma-1}{2}, \quad \forall x\in(0, \delta_1).$$
If $x\in\left[\delta_1, \frac{1}{2}\right]$, similarly as in $Case~ I$,
$$
\min_{x\in[\delta_1, 1/2]}f(x)=f(x_1)>0,
$$
and
$$
f(x)\ge\min\left\{f(x_1),\frac{\gamma-1}{2}\right\}.
$$

Consequently, we deduce that
$$
f(x)\ge\min\left\{f(x_0),f(x_1),\frac{\gamma-1}{2}\right\}>0,
$$
due to $1<\gamma<2$. Hence, letting
$$
c_1=\frac{1}{2}\min\left\{f(x_0),f(x_1),\frac{\gamma-1}{2}\right\}
$$
completes the proof of \eqref{rho0}.

As for \eqref{rho01/2}, we note
$$
0<\frac{1}{2}\bar{\rho}<\rho<\frac{3}{2}\bar{\rho}.
$$
Then, it follows from Taylor's formula that, with $\delta_2\in(0,1)$,
\begin{align*}
\rho^{\gamma}-\bar{\rho}^{\gamma}-\gamma\bar{\rho}^{\gamma-1}(\rho-\bar{\rho})&=
\frac{\gamma(\gamma-1)}{2}\left(\rho+\delta_2\bar{\rho}\right)^{\gamma-2}|\rho-\bar{\rho}|^2\\
&\ge\frac{\gamma(\gamma-1)}{2}\left(\frac{3}{2}\bar{\rho}+\delta_2\bar{\rho}\right)^{\gamma-2}|\rho-\bar{\rho}|^2\\
&\ge \frac{\gamma(\gamma-1)}{2}\left(\frac{3}{2}\bar{\rho}+\bar{\rho}\right)^{\gamma-2}|\rho-\bar{\rho}|^2.
\end{align*}
Hence, \eqref{rho01/2} holds with
$$
c_2=\frac{\gamma(\gamma-1)}{2}\left(\frac{2}{5}\right)^{2-\gamma}>0.
$$

\end{proof}

Next, we recall the invariant region theory for $L^{\infty}$ weak entropy solution to system \eqref{1.1}-\eqref{r0u0}, which was proved in \cite{GEN3}.

\begin{theorem}\label{thm2}
Suppose $0\le\nu<1$ and $\left(\rho_0, u_0\right)(x) \in L^{\infty}(\mathbf{R})$ satisfies
$$
0 \leq \rho_0(x) \leq C, \quad\left|m_0(x)\right| \leq C \rho_0(x) .
$$

Let $(\rho, u) \in L^{\infty}(\mathbf{R} \times[0, T])$ be an $L^{\infty}$ weak entropy solution of the system \eqref{1.1}-\eqref{r0u0} with $\gamma>1$. Then $(\rho, m)$ satisfies
$$
0 \leq \rho(x, t) \leq C, \quad|m(x, t)| \leq C \rho(x, t),
$$
where the constant $C$ depends solely on the initial data.
\end{theorem}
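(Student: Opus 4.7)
My plan is to establish the invariant-region bounds via vanishing viscosity, following the Chueh-Conley-Smoller framework as adapted to the damped Euler system in \cite{GEN3}. First, I would regularize by adding $\epsilon \rho_{xx}$ and $\epsilon m_{xx}$ to the two equations of \eqref{1.1} and mollify the initial data to obtain $(\rho_0^\epsilon, u_0^\epsilon)$ satisfying $\delta \le \rho_0^\epsilon \le C + \delta$ and $|m_0^\epsilon| \le (C + \delta) \rho_0^\epsilon$ for a small parameter $\delta > 0$. Standard parabolic theory provides a unique smooth solution $(\rho^\epsilon, m^\epsilon)$ on $[0, T]$ with $\rho^\epsilon > 0$ as long as the Riemann invariants stay bounded.

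Next, I would introduce the Riemann invariants $w^\epsilon = u^\epsilon + (\rho^\epsilon)^\theta$ and $z^\epsilon = u^\epsilon - (\rho^\epsilon)^\theta$ with $\theta = (\gamma-1)/2$ (the normalization consistent with $\kappa = (\gamma-1)^2/(4\gamma)$). A direct calculation starting from the viscous system yields
$$
w_t^\epsilon + \bigl(\lambda_2^\epsilon - \tfrac{2\epsilon}{\rho^\epsilon} \rho_x^\epsilon\bigr) w_x^\epsilon - \epsilon w_{xx}^\epsilon = -\frac{\alpha}{(1+t)^\nu} u^\epsilon - \epsilon \theta(\theta+1)(\rho^\epsilon)^{\theta-2}(\rho_x^\epsilon)^2,
$$
with $\lambda_2^\epsilon = u^\epsilon + \theta(\rho^\epsilon)^\theta$, together with the symmetric identity for $z^\epsilon$ (carrying $+\epsilon\theta(\theta+1)(\rho^\epsilon)^{\theta-2}(\rho_x^\epsilon)^2$ on the right). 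Choose $C_0$ so that $|w_0^\epsilon|, |z_0^\epsilon| \le C_0$ uniformly in $\epsilon$; by continuity there is an earliest time $t_*$ at which $w^\epsilon$ or $-z^\epsilon$ touches $C_0$, say at a point $x_*$. If $w^\epsilon(x_*, t_*) = C_0$, then $w_x^\epsilon = 0$ and $w_{xx}^\epsilon \le 0$ there, the $O(\epsilon)$ correction on the right is non-positive, and the damping contribution is
$$
-\frac{\alpha}{(1+t_*)^\nu} u^\epsilon(x_*, t_*) = -\frac{\alpha}{2(1+t_*)^\nu}\bigl(w^\epsilon+z^\epsilon\bigr)(x_*, t_*) \le 0,
$$
because $w^\epsilon(x_*, t_*) = C_0$ and $z^\epsilon(x_*, t_*) \ge -C_0$. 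Hence $w_t^\epsilon(x_*, t_*) \le 0$, contradicting the first-crossing assumption; the symmetric scenario (minimum of $z^\epsilon$ at $-C_0$) gives $z_t^\epsilon \ge 0$ by the same mechanism. Therefore $|w^\epsilon|, |z^\epsilon| \le C_0$ on $[0, T]$; translating back yields $0 \le \rho^\epsilon \le C$ and $|u^\epsilon| \le C$, whence $|m^\epsilon| \le C \rho^\epsilon$ with $C$ depending only on the initial data.

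Finally, I would pass to the limit $\epsilon \to 0$ via the DiPerna-Chen-Ding-Luo compensated compactness framework adapted to the damped system, extracting an almost-everywhere convergent subsequence whose limit is an $L^\infty$ weak entropy solution inheriting the bounds; within the $L^\infty$ entropy-solution class considered here the viscosity limit realizes the given $(\rho, m)$, so the estimates transfer. The main obstacle is the simultaneous sign analysis at the first-crossing extremum: one must reconcile the non-positivity of the $O(\epsilon)$ viscous correction (the classical delicate point of Chueh-Conley-Smoller) with the favorable sign of the damping, the latter hinging on the crucial observation that on the upper edge $w = C_0$ of the invariant rectangle one has $u = (w+z)/2 \ge 0$, so the damping pushes inward exactly when needed.
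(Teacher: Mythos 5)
First, a point of reference: the paper does not prove this statement at all --- Theorem \ref{thm2} is quoted verbatim from \cite{GEN3} ("which was proved in \cite{GEN3}"), so there is no in-paper proof to compare against. Your route is the classical Chueh--Conley--Smoller invariant-region argument via vanishing viscosity, which is indeed the standard way such a theorem is established for this system. Your Riemann-invariant computation checks out: with $w=u+\rho^\theta$, $z=u-\rho^\theta$ and viscosity $\epsilon\rho_{xx}$, $\epsilon m_{xx}$ added to both equations one does get
\begin{equation*}
w_t+\Bigl(\lambda_2-\tfrac{2\epsilon\rho_x}{\rho}\Bigr)w_x-\epsilon w_{xx}
=-\frac{\alpha}{(1+t)^\nu}u-\epsilon\theta(\theta+1)\rho^{\theta-2}\rho_x^2,
\end{equation*}
with the sign-reversed correction for $z$, and the observation that $u=\tfrac12(w+z)\ge 0$ on the edge $w=C_0$ (resp.\ $\le 0$ on $z=-C_0$) is exactly what makes the damping point inward. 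The usual technicalities (perturbing the rectangle to turn the first-touching inequality into a strict contradiction, handling the supremum over an unbounded line, keeping $\rho^\epsilon>0$ so that $\rho^{\theta-2}\rho_x^2$ and $\rho_x/\rho$ make sense) are standard and I would accept them as such.

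The genuine gap is in your last step. The theorem is stated for an \emph{arbitrary} $L^\infty$ weak entropy solution $(\rho,m)$ of \eqref{1.1}--\eqref{r0u0}, whereas the vanishing-viscosity argument only produces \emph{one} entropy solution satisfying the bounds, namely the limit of the viscous approximations. Your assertion that "within the $L^\infty$ entropy-solution class considered here the viscosity limit realizes the given $(\rho,m)$" is precisely a uniqueness claim for $L^\infty$ entropy solutions of the isentropic Euler system with (time-dependent) damping, and no such uniqueness theorem is available; this is a well-known open issue for $L^\infty$ entropy solutions with vacuum. As written, your argument therefore proves the theorem only for solutions obtained as viscosity limits, not for the class the statement quantifies over. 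To close the gap you would need an argument intrinsic to the entropy-solution class --- for instance, exploiting the entropy inequality for the full family of weak entropies \eqref{etaq}: since $\chi(\xi;\rho,u)$ is supported in $\xi\in[u-\rho^\theta,\,u+\rho^\theta]=[z,w]$, testing with convex generators $g$ supported in $\{\xi>C_0\}$ and $\{\xi<-C_0\}$ and using the favorable sign of the damping term in the entropy inequality propagates the support condition $[z,w]\subset[-C_0,C_0]$ forward in time for any entropy solution. Alternatively, the theorem should be restated for viscosity (or otherwise suitably constructed) solutions; either way, the step as you wrote it does not stand.
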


Let $(\rho ,m)$ be a weak entropy solution of system \eqref{1.1}-\eqref{r0u0} satisfying conditions in Theorem \ref{thm1}, with $m=\rho u$, then
\begin{eqnarray}\label{4.2}
\left\{\begin{array}{l}
\rho_t+m_x=0, \\
m_t+\left(\frac{m^2}{\rho}+p(\rho)\right)_x=-\frac{\alpha}{(1+t)^{\nu}} m.
\end{array}\right.
\end{eqnarray}

Next, suppose that $\bar{\rho}$ is the Barenblatt's solution of the porous medium equation with the same total mass $M$ as $\rho$, and $\bar{m}=-\left(\bar{\rho}^\gamma\right)_x$. In order to maintain consistency in form with the above equation, setting $\bar{R}=\bar{m}_t+\left(\frac{\bar{m}^2}{\bar{\rho}}\right)_x$, one has
\begin{eqnarray}\label{4.3}
\left\{\begin{array}{l}
\bar{\rho}_t+\bar{m}_x=0, \\
\bar{m}_t+\left(\frac{\bar{m}^2}{\bar{\rho}}+p(\bar{\rho})\right)_x=-\frac{\alpha}{(1+t)^{\nu}} \bar{m}+\bar{R}.
\end{array}\right.
\end{eqnarray}
Then, define
$$
\left\{\begin{array}{l}
w=\rho-\bar{\rho}, \\
z=m-\bar{m}.
\end{array}\right.
$$
It follows from \eqref{4.2} and \eqref{4.3} that
\begin{eqnarray}\label{4.4}
\left\{\begin{array}{l}
w_t+z_x=0, \\
z_t+\left(\frac{m^2}{\rho}\right)_x+\left(p(\rho)-p(\bar{\rho})\right)_x+\frac{\alpha}{(1+t)^{\nu}} z=-\bar{m}_t .
\end{array}\right.
\end{eqnarray}
Set $y=-\int_{-\infty}^x w(r, t) \mathrm{d} r$. It holds that
$$
y_x=-w, \quad z=y_t .
$$
Hence, the equation \eqref{4.4} turns into a nonlinear wave equation with source terms, degenerate at vacuum:
\begin{equation}\label{ytt}
y_{tt}+\left(\frac{m^2}{\rho}\right)_x+\left(p(\rho)-p(\bar{\rho})\right)_x+\frac{\alpha}{(1+t)^{\nu}}  y_t=-\bar{m}_t .
\end{equation}

We now derive the estimate of $\bar{\rho}^{\beta_1}\bar{u}^{\beta_2}$ and $\bar{\rho}^\delta\left(\frac{\bar{R}}{\bar{\rho}}\right)$ in the following lemma.
\begin{lemma}\label{lem4}
Let $1\leq p<+\infty$,
\begin{equation}\label{beta1}
\beta_1\ge-\frac{\gamma-1}{p},
\end{equation}
$\beta_2\ge0$, and $\delta\ge0$. Then, it holds that
\begin{equation}\label{rhobeta}
\left\|\bar{\rho}^{\beta_1}\bar{u}^{\beta_2}\right\|_{L^p}\leq C(1+t)^{-\frac{p[(1+\nu)\beta_1+(\gamma-\nu)\beta_2]-(1+\nu)}{p(r+1)}},
\end{equation}
\begin{equation}\label{rhobeta'}
\left\|\bar{\rho}^{\beta_1}\bar{u}^{\beta_2}\right\|_{L^{\infty}}\leq C(1+t)^{-\frac{(1+\nu)\beta_1+(\gamma-\nu)\beta_2}{r+1}},
\end{equation}
and
\begin{equation}\label{rhodelta}
\left\|\bar{\rho}^\delta\left(\frac{\bar{R}}{\bar{\rho}}\right)\right\|_{L^p} \leq C(1+t)^{-\frac{p[\delta(1+\nu)+2\gamma+1-\nu ]-(1+\nu)}{p(\gamma+1)}} .
\end{equation}
\end{lemma}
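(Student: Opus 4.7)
The plan is to exploit the self-similar structure of the Barenblatt profile \eqref{barrho}--\eqref{ux}. Setting
\[
x=\sqrt{A_0/B_0}\,(1+t)^{(1+\nu)/(\gamma+1)}\,y,
\]
the support of $\bar{\rho}$ becomes the fixed interval $y\in[-1,1]$, and the profiles take the explicit form
\[
\bar{\rho}=A_0^{1/(\gamma-1)}(1+t)^{-(1+\nu)/(\gamma+1)}(1-y^2)_+^{1/(\gamma-1)},\qquad \bar{u}=C_\ast\,y\,(1+t)^{-(\gamma-\nu)/(\gamma+1)},
\]
with Jacobian $dx=C_1(1+t)^{(1+\nu)/(\gamma+1)}dy$. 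All the time-dependence is therefore explicit and can be pulled out of each $L^p$ integral, reducing the estimate to a one-variable integral over the fixed interval $[-1,1]$.

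For \eqref{rhobeta}, this substitution gives
\[
\|\bar{\rho}^{\beta_1}\bar{u}^{\beta_2}\|_{L^p}^p = C(1+t)^{-\{p[(1+\nu)\beta_1+(\gamma-\nu)\beta_2]-(1+\nu)\}/(\gamma+1)}\int_{-1}^{1}(1-y^2)^{p\beta_1/(\gamma-1)}|y|^{p\beta_2}\,dy,
\]
and the remaining integral is finite precisely under the hypothesis $p\beta_1/(\gamma-1)>-1$, i.e.\ \eqref{beta1}. Taking the $p$-th root yields \eqref{rhobeta}, and \eqref{rhobeta'} is obtained immediately by reading off the supremum over $y\in[-1,1]$ from the explicit profiles above.

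The key step for \eqref{rhodelta} is a closed-form expression for $\bar{R}$. Using $\bar{m}=\bar{\rho}\bar{u}$, the continuity equation $\bar{\rho}_t+\bar{m}_x=0$, and the linearity of $\bar{u}$ in $x$ from \eqref{ux} (which gives $\bar{u}_t=-\bar{u}/(1+t)$ and $\bar{u}_x=(1+\nu)/[(\gamma+1)(1+t)]$), a direct calculation produces
\[
\bar{R}=\bar{m}_t+(\bar{\rho}\bar{u}^2)_x=\bar{\rho}\bar{u}\!\left(\bar{u}_x+\frac{\bar{u}_t}{\bar{u}}\right)=-\frac{\gamma-\nu}{(\gamma+1)(1+t)}\,\bar{\rho}\bar{u},
\]
so $\bar{R}/\bar{\rho}=-\tfrac{\gamma-\nu}{(\gamma+1)(1+t)}\bar{u}$. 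Applying \eqref{rhobeta} with $\beta_1=\delta$, $\beta_2=1$ and then multiplying by the extra $(1+t)^{-1}$ gives the exponent
\[
-\frac{p[\delta(1+\nu)+(\gamma-\nu)+(\gamma+1)]-(1+\nu)}{p(\gamma+1)} = -\frac{p[\delta(1+\nu)+2\gamma+1-\nu]-(1+\nu)}{p(\gamma+1)},
\]
which is exactly \eqref{rhodelta}. The main obstacle is spotting the cancellation that turns $\bar{R}$ into a simple multiple of $\bar{\rho}\bar{u}$; once this is in hand, \eqref{rhodelta} follows immediately from \eqref{rhobeta}, and all remaining work is routine Jacobian bookkeeping under the self-similar change of variable.
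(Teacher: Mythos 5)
Your proposal is correct and follows essentially the same route as the paper: the self-similar change of variable reducing the $L^p$ integrals to a fixed integral over $[-1,1]$, and the cancellation $\bar{R}/\bar{\rho}=\bar{u}_t+\bar{u}\bar{u}_x=-\frac{(1+\nu)(\gamma-\nu)}{(\gamma+1)^2(1+t)^2}x$, which is exactly your $-\frac{\gamma-\nu}{(\gamma+1)(1+t)}\bar{u}$. The only (harmless) difference is that you obtain \eqref{rhodelta} by citing \eqref{rhobeta} with $\beta_1=\delta$, $\beta_2=1$ rather than redoing the substitution, and your integrability condition is correctly stated as the strict inequality $p\beta_1/(\gamma-1)>-1$, which is what the borderline case of \eqref{beta1} actually requires.
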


\begin{proof}
On one hand, using \eqref{barrho}, \eqref{ux} and $\xi=x(t+1)^{-\frac{1+\nu}{\gamma+1}}$, we have
\begin{align*}
&\left\|\bar{\rho}^{\beta_1}\bar{u}^{\beta_2}\right\|^p_{L^p}\\
=&\int\left\{(1+t)^{-\frac{1+\nu}{\gamma+1}}\left[\left(A_0-B_0 (1+t)^{-\frac{2(1+\nu)}{\gamma+1}}x^2\right)_{+}\right]^{\frac{1}{\gamma-1}}\right\}^{\beta_1 p}\left[\frac{ x(1+\nu)}{(1+t)(\gamma+1)}\right]^{p\beta_2}dx\\
\leq& C\int^{\sqrt{\frac{A_0}{B_0}}}_{-\sqrt{\frac{A_0}{B_0}}}(1+t)^{-\frac{1+\nu}{\gamma+1}\beta_1 p}(1+t)^{-\frac{\gamma-\nu}{\gamma+1}p\beta_2}
\left[\left(A_0-B_0\xi^2\right)_{+}\right]^{\frac{\beta_1 p}{\gamma-1}}\xi^{p\beta_2}(1+t)^{\frac{1+\nu}{\gamma+1}}d\xi\\
\leq&C(1+t)^{-\frac{p[(1+\nu)\beta_1+(\gamma-\nu)\beta_2]-(1+\nu)}{r+1}},
\end{align*}
where the last inequality is from
\begin{align*}
\int^{\sqrt{\frac{A_0}{B_0}}}_{-\sqrt{\frac{A_0}{B_0}}}\left[\left(A_0-B_0\xi^2\right)_{+}\right]^{\frac{\beta_1 p}{\gamma-1}}\xi^{p\beta_2}d
\xi&\leq  C\int^{1}_{-1}\left[\left(A_0-A_0\tau^2\right)_{+}\right]^{\frac{\beta_1 p}{\gamma-1}}\sqrt{\frac{A_0}{B_0}}d\tau\\
&=C\int^{1}_{0}(1-\tau)^{\frac{\beta_1 p}{\gamma-1}}(1+\tau)^{\frac{\beta_1 p}{\gamma-1}}d\tau\\
&\leq C\int^{1}_{0}(1-\tau)^{\frac{\beta_1 p}{\gamma-1}}d\tau\leq C,
\end{align*}
by $p\beta_2\ge0$, $\sqrt{\frac{A_0}{B_0}}\tau=\xi$, and \eqref{beta1}. Hence, we can obtain \eqref{rhobeta}. Particularly,
\begin{align*}
\left\|\bar{\rho}^{\beta_1}\bar{u}^{\beta_2}\right\|_{L^{\infty}}=ess \sup_{x\in R}\left (\bar{\rho}^{\beta_1}\bar{u}^{\beta_2}\right)
\leq C(1+t)^{-\frac{(\gamma-\nu)\beta_2+(1+\nu)\beta_1}{r+1}},
\end{align*}
by $x=C(1+t)^{\frac{1+\nu}{\gamma+1}}$, which gives \eqref{rhobeta'}.

On the other hand, since $\bar{R}=\bar{m}_t+\left(\frac{\bar{m}^2}{\bar{\rho}}\right)_x$, and $\bar{m}=\bar{\rho}\bar{u}$, we have
\begin{align*}
\bar{R}&=(\bar{\rho}\bar{u})_t+\left(\bar{\rho}\bar{u}^2\right)_x=\bar{\rho}_t\bar{u}+\bar{\rho}\bar{u}_t+\bar{\rho}_x\bar{u}^2+
2\bar{\rho}\bar{u}\bar{u}_x=\bar{\rho}\bar{u}_t+\bar{\rho}\bar{u}\bar{u}_x,
\end{align*}
by $\bar{\rho}_t+(\bar{\rho}\bar{u})_x=0$. Combining the above equality and \eqref{ux}, we derive
$$
\frac{\bar{R}}{\bar{\rho}}=\bar{u}_t+\bar{u}\bar{u}_x=-\frac{(1+\nu)(\gamma-\nu) }{(1+t)^{2}(\gamma+1)^2}x,
$$
which implies that with $\xi=x(t+1)^{-\frac{1+\nu}{\gamma+1}}$,
\begin{align*}
&\left\|\bar{\rho}^\delta\left(\frac{\bar{R}}{\bar{\rho}}\right)\right\|^{p}_{L^p} \\ =&\int\left\{(1+t)^{-\frac{1+\nu}{\gamma+1}}\left[\left(A_0-B_0 (1+t)^{-\frac{2(1+\nu)}{\gamma+1}}x^2\right)_{+}\right]^{\frac{1}{\gamma-1}}\right\}^{\delta p}\left[\frac{(1+\nu)(\gamma-\nu) x}{(1+t)^{2}(\gamma+1)^2}\right]^{p}dx\\
\leq& C\int^{\sqrt{\frac{A_0}{B_0}}}_{-\sqrt{\frac{A_0}{B_0}}}(1+t)^{-\frac{(1+\nu)\delta p}{\gamma+1}}(1+t)^{-\frac{p(2\gamma+1-\nu)}{\gamma+1}}
\left[\left(A_0-B_0\xi\right)_{+}\right]^{\frac{\delta p}{\gamma-1}}\xi^p(1+t)^{\frac{1+\nu}{\gamma+1}}d\xi\\
\leq& C(1+t)^{-\frac{p[\delta(1+\nu)+2\gamma+1-\nu]-(1+\nu)}{\gamma+1}},
\end{align*}
from which we complete the proof of \eqref{rhodelta}.
\end{proof}

\section{Extension of key intermediate estimates}

In this section, we are going to extend the results in \cite[Lemmas 3.8 and 3.9]{GEN3} (with $\gamma\in(1,3)$) to $\gamma\in(1,+\infty)$. In \cite{GEN3}, the authors used the theory of divergence-measure fields (see Chen and Frid \cite{CHE1}), \eqref{ytt} and the following two entropy inequalities
\begin{equation}\label{eta_e}
\eta_{et}+q_{ex}+\frac{m^2}{(1+t)^{\nu}\rho}\le0,
\end{equation}
and
\begin{equation}\label{tildeeta}
\tilde{\eta}_{t}+\tilde{q}_{x}+\frac{2C_2m^2}{(1+t)^{\nu}}+\frac{A_mm}{(1+t)^{\nu}}\leq0,
\end{equation}
where
\begin{align*}
\eta_{e}=\frac{\rho}{2} \int_{-1}^1 \left|u+z \rho^\theta\right|^2\left(1-z^2\right)^\lambda d z=\frac{1}{2}\frac{m^2}{\rho}+\frac{\kappa}{\gamma-1}\rho^{\gamma},
\end{align*}
and
\begin{align*}
\tilde{\eta}=\rho \int_{-1}^1 \left|u+z \rho^\theta\right|^{\frac{2\gamma}{\gamma+2}}\left(1-z^2\right)^\lambda d z=C_1\rho^{\gamma+1}+C_2m^2+A(\rho,m),
\end{align*}
with
\begin{equation}\label{C1C2}
\begin{aligned}
& C_1=\int_{-1}^1|z|^{\frac{2 \gamma}{\gamma-1}}\left(1-z^2\right)^\lambda d z, \\
& C_2=\frac{\gamma(\gamma+1)}{(\gamma-1)^2} \int_{-1}^1|z|^{\frac{2}{\gamma-1}}\left(1-z^2\right)^\lambda d z=\frac{2 \gamma(\gamma+1)}{(\gamma-1)^2} C_1,
\end{aligned}
\end{equation}
and
\begin{equation}\label{Amm3A}
A_mm\ge 3A(\rho,m)\ge0.
\end{equation}

In addition, they make use of
$$
\eta^*=\tilde{\eta}-C_1\bar{\rho}^{\gamma+1}-C_1(\gamma+1)\bar{\rho}^{\gamma}(\rho-\bar{\rho}),
$$
and obtain
\begin{equation}\label{eta^*}
\eta^{*}_{ t}+\{\cdots\}_x+\frac{2 C_2}{(1+t)^\lambda}(m-\bar{m})^2+\frac{A_m m}{(1+t)^\lambda} \leq C_1(\gamma+1)\left(\bar{\rho}_x^\gamma y\right)_t+2 C_1(\gamma+1) \bar{\rho}_t^\gamma y_x ,
\end{equation}
where $C_1$ and $C_2$ are defined in \eqref{C1C2}, and $\{\cdots\}_x$ denotes terms which vanish after integrating over $\mathbf{R}$.

 According to the theory of divergence-measure fields, \eqref{ytt} and \eqref{eta_e} are the same in both cases, and hence we only need to consider \eqref{tildeeta}, \eqref{Amm3A} and \eqref{eta^*}. For this, we state and give two lemmas. The first one is the classical Taylor's formula.

\begin{lemma}\label{lem41}
Let $f(\xi)=|\xi|^{k}$ with $ k\in \mathbf{R}_+$. Then, for any $0\le n\le k, n\in \mathbf{N}$, it holds that
$$
f(u+z)-f(z)-f'(z)u-\cdot\cdot\cdot-\frac{f^{(n)}(z)}{n!}u^n=u^{n+1}\int^{1}_{0}\frac{(1-s)^{n}}{n!}f^{(n+1)}(su+z)ds.
$$
\end{lemma}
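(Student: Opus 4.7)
The plan is to reduce the claimed expansion to the standard one-variable Taylor formula with integral remainder, applied to the auxiliary map $g(s):=f(z+su)$ on $[0,1]$.

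First, I would compute the derivatives of $g$. By the chain rule, whenever $z+su\neq 0$ one has $g^{(j)}(s)=u^{j}f^{(j)}(z+su)$ for $1\le j\le n+1$. Since $f(\xi)=|\xi|^{k}$ satisfies $f^{(j)}(\xi)=c_{k,j}\,|\xi|^{k-j}\mathrm{sgn}(\xi)^{j}$ for $\xi\neq 0$, and $f^{(j)}(0)=0$ whenever $j<k$, the hypothesis $n\le k$ guarantees that $f\in C^{n}(\mathbb{R})$ and hence $g\in C^{n}([0,1])$. Moreover, $f^{(n+1)}(\xi)$ behaves like $|\xi|^{k-n-1}$ near the origin, which is locally integrable because $k-n-1>-1$; the only marginal case $n=k$ (which forces $k\in\mathbb{N}$) reduces either to a polynomial (if $k$ is even) or is handled by a straightforward limiting argument displacing $z$ off the singular set (if $k$ is odd).

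Second, I would invoke the classical Taylor formula with integral remainder for $g$, namely
$$
g(1)=\sum_{j=0}^{n}\frac{g^{(j)}(0)}{j!}+\int_{0}^{1}\frac{(1-s)^{n}}{n!}\,g^{(n+1)}(s)\,ds,
$$
which itself is obtained by starting from $g(1)-g(0)=\int_{0}^{1}g'(s)\,ds$ and performing $n$ successive integrations by parts, each step transferring one derivative off $g$ while generating the factor $(1-s)^{j}/j!$ with vanishing boundary contribution at $s=1$.

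Finally, substituting $g(0)=f(z)$, $g(1)=f(u+z)$, $g^{(j)}(0)=u^{j}f^{(j)}(z)$, and $g^{(n+1)}(s)=u^{n+1}f^{(n+1)}(z+su)$ into the above identity yields exactly the formula in the statement. The main (and essentially only) obstacle is the careful handling of the isolated singularity of $f^{(n+1)}$ at $\xi=0$, which is absorbed by the local integrability observation in the first step; apart from this technicality the argument is entirely routine, which is why the paper labels the statement a "classical" Taylor formula.
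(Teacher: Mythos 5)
The paper offers no proof of this lemma at all: it simply labels it ``the classical Taylor's formula'' and moves on. Your argument is exactly the standard one the authors have in mind --- reduce to the one-variable Taylor formula with integral remainder for $g(s)=f(z+su)$, obtained by $n$ integrations by parts from $g(1)-g(0)=\int_0^1 g'(s)\,ds$ --- and it is correct in the regime $n<k$, where $f\in C^n(\mathbf{R})$, $f^{(n)}$ is locally absolutely continuous, and $f^{(n+1)}(\xi)\sim|\xi|^{k-n-1}$ is locally integrable. This covers every use in the paper (only $n=1$ with $k=\tfrac{2\gamma}{\gamma-1}>2$ is ever needed, in the proof of Lemma \ref{lem42}).

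One caveat on your treatment of the boundary case $n=k$: when $k$ is an odd integer the identity is actually \emph{false} in the classical sense, and no limiting argument ``displacing $z$ off the singular set'' can rescue it. Take $k=n=1$, $f(\xi)=|\xi|$, $z=1$, $u=-2$: the left-hand side equals $|{-1}|-1-(1)(-2)=2$, while $f''\equiv 0$ away from the origin, so the right-hand side vanishes for almost every choice of $z$. The failure is not an isolated-singularity issue but the fact that $f^{(k)}=k!\,(\operatorname{sgn}\xi)^k$ has a jump, so $f^{(k+1)}$ is a Dirac mass and the remainder integral must be read distributionally. This is really a defect of the lemma's stated range $0\le n\le k$ rather than of your proof, and it never arises in the paper's applications, but the claim that a limiting argument handles it should be dropped or replaced by the restriction $n<k$ (or $n\le k$ with $k$ even).
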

The second one is in the following.
\begin{lemma}\label{lem42}
For $1<\gamma<+\infty$ and $g(\xi)=|\xi|^{\frac{2\gamma}{\gamma-1}}$, it holds that
$$
\eta=\rho \int_{-1}^1 g\left(u+z \rho^\theta\right)\left(1-z^2\right)^\lambda dz=C_1\rho^{\gamma+1}+C_2m^2+B(\rho,m),
$$
where $C_1$, $C_2$ are defined in \eqref{C1C2}, and
$$
B_mm\ge 2B(\rho,m)\ge0.
$$
\end{lemma}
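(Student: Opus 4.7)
The plan is to Taylor-expand $f(\xi) = |\xi|^k$ with $k := 2\gamma/(\gamma-1) > 2$ about $\xi = z\rho^\theta$ via Lemma~\ref{lem41} with $n = 2$, identify the integrated zeroth-, first-, and second-order terms with $C_1\rho^{\gamma+1}$, $0$, and $C_2 m^2$ respectively, and define $B(\rho,m)$ as the integrated remainder. Explicitly, writing
\[
f(u + z\rho^\theta) = f(z\rho^\theta) + u f'(z\rho^\theta) + \tfrac{u^2}{2} f''(z\rho^\theta) + R_3(u; z\rho^\theta)
\]
and integrating against $(1-z^2)^\lambda dz$ after multiplying by $\rho$: the zeroth-order term yields $\rho^{\gamma+1} \int |z|^k (1-z^2)^\lambda dz = C_1\rho^{\gamma+1}$ via $k\theta = \gamma$; the first-order term vanishes because $f'(z\rho^\theta) = k|z\rho^\theta|^{k-2}(z\rho^\theta)$ is odd in $z$; the second-order term, using $f''(\xi) = k(k-1)|\xi|^{k-2}$, $(k-2)\theta = 1$, and $\rho^2 u^2 = m^2$, becomes $\tfrac{k(k-1)}{2} m^2 \int |z|^{2/(\gamma-1)}(1-z^2)^\lambda dz = C_2 m^2$; the remainder then defines $B(\rho,m) := \rho \int R_3(u; z\rho^\theta)(1-z^2)^\lambda dz$.

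For $B \geq 0$, I write $\eta = \rho h(u)$ with $h(u) = \int |u + z\rho^\theta|^k (1-z^2)^\lambda dz$, which is even in $u$ by the substitution $z \mapsto -z$, so that $B = \rho \tilde h(u)$ with $\tilde h(u) := h(u) - h(0) - \tfrac{h''(0)}{2} u^2$. Applying Lemma~\ref{lem41} with $n = 1$ and subtracting $\tfrac{u^2}{2} f''(z\rho^\theta)$ yields
\[
\tilde h(u) = u^2 \int_0^1 (1-s)\bigl[h''(su) - h''(0)\bigr] ds,
\]
so the task reduces to $h''(v) \geq h''(0)$ for every $v$. After $z \mapsto -z$ symmetrization this becomes the pointwise inequality $\tfrac{1}{2}[|v+y|^{k-2} + |v-y|^{k-2}] \geq |y|^{k-2}$ (with $y = z\rho^\theta$), which follows from midpoint convexity of $|\cdot|^{k-2}$ applied to the pair $-(v+y), v-y$ whose midpoint is $-y$ --- valid whenever $k \geq 3$, i.e., $\gamma \leq 3$.

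For $B_m m \geq 2B$, a short calculation gives $B_m = \tilde h'(u)$ and $B_m m - 2B = \rho\,\phi(u)$ with $\phi(u) := u\tilde h'(u) - 2\tilde h(u)$. Direct differentiation yields $\phi(0) = \phi'(0) = 0$ and $\phi''(u) = u h'''(u)$. Since $h''$ is even, $h'''$ is odd; and for $k \geq 3$ an analogous $z \mapsto -z$ symmetrization shows $h'''(u) \geq 0$ for $u \geq 0$, because the symmetrized integrand $|u+y|^{k-3}\operatorname{sgn}(u+y) + |u-y|^{k-3}\operatorname{sgn}(u-y)$ is non-negative when $|y| < u$ and either zero or positive when $|y| > u$ (using that $t \mapsto t^{k-3}$ is nondecreasing for $k \geq 3$). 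Thus $\phi'' \geq 0$, and combined with $\phi(0) = \phi'(0) = 0$ this yields $\phi \geq 0$.

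The main obstacle is the range $k \in (2,3)$, i.e., $\gamma > 3$: there $|\cdot|^{k-2}$ is concave away from zero, the pointwise midpoint convexity fails, and the symmetrized integrand controlling the sign of $h'''$ itself changes sign across $|y| = u$. I would handle this regime by a finer analysis of the symmetrized integrands $\tfrac{1}{2}[|v+y|^{k-2}+|v-y|^{k-2}] - |y|^{k-2}$ and its counterpart for $h'''$, exploiting that the Lions--Perthame--Tadmor weight $(1-z^2)^\lambda$ with $\lambda = (3-\gamma)/[2(\gamma-1)] < 0$ concentrates near $z = \pm 1$: although the integrands change sign in $z$, their integrals against $(1-z^2)^\lambda dz$ should remain non-negative through a cancellation between the cusp contribution near $y = -v$ and the contribution away from it, thereby delivering $\tilde h(u) \geq 0$ and $\phi(u) \geq 0$ for all $\gamma > 1$.
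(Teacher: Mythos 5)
Your decomposition is correct and, for $\gamma\le 3$, your whole argument is essentially the paper's in a different packaging: the paper stops the Taylor expansion at $n=1$, peels $C_2m^2$ out of the integral remainder, and proves the two sign conditions by showing that the inner profile $h(a)=\int_{-1}^1|sa+z|^{2/(\gamma-1)}(1-z^2)^\lambda\,dz$ is even with $h'(a)\ge0$ for $a\ge0$ (via a shift of variable and the comparison $1-(v-sa)^2\ge 1-(v+sa)^2$), whereas you differentiate twice in $u$ and invoke midpoint convexity of $|\cdot|^{k-2}$ together with monotonicity of $t\mapsto t^{k-3}$. Both routes rest on the same symmetrization-plus-monotonicity mechanism and both break at the same threshold $\gamma=3$ (equivalently $k=3$, $\lambda=0$).

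The genuine gap is exactly where you flag it, and it is not a removable one: for $\gamma>3$ one has $p:=k-2=\tfrac{2}{\gamma-1}\in(0,1)$ and $\lambda<0$, your two pointwise inequalities fail, and the closing paragraph is a hope rather than an argument. Worse, the inequality you still need, $\int_{-1}^1\big(|v+y|^{p}-|y|^{p}\big)(1-y^2)^\lambda\,dy\ge0$, is genuinely borderline: for $0<p<1$ one checks that $\int_{-R}^{R}\big(|v+y|^{p}-|y|^{p}\big)\,dy=\tfrac{(R+v)^{p+1}+(R-v)^{p+1}-2R^{p+1}}{p+1}\to0$ as $R\to\infty$, so with a flat weight on the whole line the positive contribution from $|y|\lesssim|v|$ and the negative contribution from $|y|\gtrsim|v|$ cancel \emph{exactly}; the sign of the actual integral is decided entirely by the deviation of $(1-y^2)^\lambda$ from a constant and by the finiteness of the support, and no pointwise estimate on the integrand can settle it. Any completion of your proof must therefore use the specific weight quantitatively, which you have not done; the same remark applies to the sign of $h'''$ needed for $B_mm\ge 2B$. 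Since the stated purpose of Lemma 4.2 is precisely the extension from $1<\gamma<3$ to all $\gamma>1$, what you have proved is the previously known range. (For completeness: the paper's own justification of $h'(a)\ge0$, namely "$1-(v-sa)^2\ge1-(v+sa)^2$", likewise yields the claimed sign of the weight comparison only when $\lambda\ge0$, so you have correctly isolated the real difficulty --- but isolating it is not resolving it.)
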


\begin{proof}

According to \eqref{etaq} and Lemma \ref{lem32}, for $g(\xi)=|\xi|^{\frac{2\gamma}{\gamma-1}}$, we obtain
\begin{equation}\label{etac2m}
\begin{aligned}
\eta &=\rho \int_{-1}^1 g\left(u+z \rho^\theta\right)\left(1-z^2\right)^\lambda dz\\
&=\rho \int_{-1}^1\left[g(z \rho^\theta)+g'(z \rho^\theta)u+u^2\int^{1}_{0}(1-s)g''(su+z \rho^\theta)ds\right]\left(1-z^2\right)^\lambda dz\\
&=C_1\rho^{\gamma+1}+\frac{2\gamma(\gamma+1)}{(\gamma-1)^2}\rho u^2\int_{-1}^1\int^{1}_{0}(1-s)|su+z \rho^\theta|^{\frac{2}{\gamma-1}}\left(1-z^2\right)^\lambda ds dz\\
&=C_1\rho^{\gamma+1}+\frac{2\gamma(\gamma+1)}{(\gamma-1)^2}\rho^2 u^2\int_{-1}^1\int^{1}_{0}(1-s)\left|s\frac{u}{\rho^\theta}+z \right|^{\frac{2}{\gamma-1}}\left(1-z^2\right)^\lambda ds dz,
\end{aligned}
\end{equation}
due to the fact that
$$
\int_{-1}^1g'(z \rho^\theta)\left(1-z^2\right)^\lambda dz=0.
$$
Then, with $a=\frac{u}{\rho^\theta}$, we set
\begin{align*}
h(a)&=\int_{-1}^1\left|sa+z \right|^{\frac{2}{\gamma-1}}\left(1-z^2\right)^\lambda dz\\
&=\int_{0}^1\left|sa+z \right|^{\frac{2}{\gamma-1}}\left(1-z^2\right)^\lambda dz+\int_{-1}^0\left|sa+z \right|^{\frac{2}{\gamma-1}}\left(1-z^2\right)^\lambda dz\\
&=\int_{0}^1\left|sa+z \right|^{\frac{2}{\gamma-1}}\left(1-z^2\right)^\lambda dz+\int_{0}^1\left|sa-z \right|^{\frac{2}{\gamma-1}}\left(1-z^2\right)^\lambda dz,
\end{align*}
which implies that
$$
h(a)=h(-a).
$$
It is obvious that
$$h(0)=\int_{-1}^1|z |^{\frac{2}{\gamma-1}}\left(1-z^2\right)^\lambda dz=\frac{(\gamma-1)^2}{\gamma(\gamma+1)}C_2,$$
and
\begin{equation}\label{h'a}
\begin{aligned}
&h'(a)\\
=&\frac{2s}{\gamma-1}\int_{-1}^1\frac{|sa+z |^{\frac{2}{\gamma-1}}}{sa+z}\left(1-z^2\right)^\lambda dz\\
=&\frac{2s}{\gamma-1}\left[\int^{-sa}_{-1} \frac{\left|sa+z\right|^{\frac{2}{\gamma-1}}}{sa+z}\left(1-z^2\right)^{\lambda}dz+\int ^{1}_{-sa} \left(sa+z\right)^{\frac{3-\gamma}{\gamma-1}}\left(1-z^2\right)^{\lambda}dz\right]\\
=&\frac{2s}{\gamma-1}\left[\int^{0}_{sa-1}\frac{|v|^{\frac{2}{\gamma-1}}}{v}\left(1-(v-sa)^2\right)^{\lambda}dv+
\int^{1+sa}_{0}v^{\frac{3-\gamma}{\gamma-1}}\left(1-(v-sa)^2\right)^{\lambda}dv\right]\\
=&\frac{2s}{\gamma-1}\left[-\int^{1-sa}_{0}v^{\frac{3-\gamma}{\gamma-1}}\left(1-(v+sa)^2\right)^{\lambda}dv+
\int^{1+sa}_{0}v^{\frac{3-\gamma}{\gamma-1}}\left(1-(v-sa)^2\right)^{\lambda}dv\right]\\
\ge&0,
\end{aligned}
\end{equation}
where the last inequality is from
$$
1-(v-sa)^2\ge1-(v+sa)^2,
$$
for any $v\in[0,1-sa]$ and $s\ge0$. Hence, $h(a)\ge h(0)=\frac{(\gamma-1)^2}{\gamma(\gamma+1)}C_2$.

Combining the above arguments and \eqref{etac2m}, we deduce that
\begin{align*}
\eta&=C_1\rho^{\gamma+1}+\frac{2\gamma(\gamma+1)}{(\gamma-1)^2}\rho^2 u^2\int_{-1}^1\int^{1}_{0}(1-s)\left|sa+z \right|^{\frac{2}{\gamma-1}}\left(1-z^2\right)^\lambda ds dz\\
&=C_1\rho^{\gamma+1}+\frac{2\gamma(\gamma+1)}{(\gamma-1)^2}m^2\int^{1}_{0}(1-s)h(a)ds\\
&=C_1\rho^{\gamma+1}+C_2m^2+m^2\left[\frac{2\gamma(\gamma+1)}{(\gamma-1)^2}\int^{1}_{0}(1-s)h(a)ds-C_2\right],
\end{align*}
and
\begin{align*}
B(\rho,m)&=m^2\left[\frac{2\gamma(\gamma+1)}{(\gamma-1)^2}\int^{1}_{0}(1-s)h(a)ds-C_2\right]\\
&\ge C_2m^2\left(2\int^{1}_{0}(1-s)ds-1\right)=0.
\end{align*}

To prove $B_mm\ge 2B(\rho,m)$, we rewrite $B(\rho, m)$ as follows
\begin{equation}\label{Brhom}
B(\rho,m)=\frac{2\gamma(\gamma+1)}{(\gamma-1)^2}\frac{m^2}{\rho}\int_{-1}^1\int^{1}_{0}(1-s)|su+z \rho^\theta|^{\frac{2}{\gamma-1}}\left(1-z^2\right)^\lambda ds dz-C_2m^2.
\end{equation}
Consequently,
\begin{align*}
B_m=&\frac{4\gamma(\gamma+1)}{(\gamma-1)^2}\frac{m}{\rho}\int_{-1}^1\int^{1}_{0}(1-s)|su+z \rho^\theta|^{\frac{2}{\gamma-1}}\left(1-z^2\right)^\lambda ds dz-2C_2m\\
&+\frac{4\gamma(\gamma+1)}{(\gamma-1)^3} u^2\int_{-1}^1\int^{1}_{0}(1-s)s\frac{|su+z \rho^\theta|^{\frac{2}{\gamma-1}}}{su+z\rho^\theta}\left(1-z^2\right)^\lambda ds dz,
\end{align*}
and
\begin{equation}\label{Bmm2}
\begin{aligned}
B_mm=&\frac{4\gamma(\gamma+1)}{(\gamma-1)^2}\frac{m^2}{\rho}\int_{-1}^1\int^{1}_{0}(1-s)|su+z \rho^\theta|^{\frac{2}{\gamma-1}}\left(1-z^2\right)^\lambda ds dz-2C_2m^2\\
&+\frac{4\gamma(\gamma+1)}{(\gamma-1)^3}u^2m\int_{-1}^1\int^{1}_{0}(1-s)s\frac{|su+z \rho^\theta|^{\frac{2}{\gamma-1}}}{su+z\rho^\theta}\left(1-z^2\right)^\lambda ds dz.
\end{aligned}
\end{equation}
Comparing this with \eqref{Brhom} gives $B_m m\ge2B(\rho, m)$. Hence, we only need to show the nonnegativity of the last term on the right-side of \eqref{Bmm2}.

To this end, we set
\begin{align*}
h_1(a)&=u\int_{-1}^1\frac{|su+z \rho^\theta|^{\frac{2}{\gamma-1}}}{su+z\rho^\theta}\left(1-z^2\right)^\lambda dz\\
&=\rho a\int_{-1}^1\frac{|sa+z |^{\frac{2}{\gamma-1}}}{sa+z}\left(1-z^2\right)^\lambda dz\\
&=\rho a\int_{0}^1\frac{|sa+z |^{\frac{2}{\gamma-1}}}{sa+z}\left(1-z^2\right)^\lambda dz+\rho a\int_{0}^1\frac{|sa-z |^{\frac{2}{\gamma-1}}}{sa-z}\left(1-z^2\right)^\lambda dz,
\end{align*}
which implies that
\begin{align*}
h_1(-a)&=-\rho a\int_{0}^1\frac{|-sa+z |^{\frac{2}{\gamma-1}}}{-sa+z}\left(1-z^2\right)^\lambda dz-\rho a\int_{0}^1\frac{|-sa-z |^{\frac{2}{\gamma-1}}}{-sa-z}\left(1-z^2\right)^\lambda dz\\
&=\rho a\int_{0}^1\frac{|sa-z |^{\frac{2}{\gamma-1}}}{sa-z}\left(1-z^2\right)^\lambda dz+\rho a\int_{0}^1\frac{|sa+z |^{\frac{2}{\gamma-1}}}{sa+z}\left(1-z^2\right)^\lambda dz\\
&=h_1(a).
\end{align*}
In addition, for $\rho\ge0$ and $a\ge0$, we have
\begin{align*}
h_1(a)&=\rho a\int_{-1}^1\frac{|sa+z |^{\frac{2}{\gamma-1}}}{sa+z}\left(1-z^2\right)^\lambda dz=\frac{\gamma-1}{2s}\rho ah'(a)\ge0,
\end{align*}
where the last inequality is from \eqref{h'a}. Consequently, $h_1(a)\ge0, \forall a\in R$ by $h_1(-a)=h_1(a)$.

Hence, we have the estimate of the last term on the right-side of \eqref{Bmm2} as follows
\begin{align*}
&u^2m\int_{-1}^1\int^{1}_{0}(1-s)s\frac{|su+z \rho^\theta|^{\frac{2}{\gamma-1}}}{su+z\rho^\theta}\left(1-z^2\right)^\lambda ds dz\\
=&um\int^{1}_{0}(1-s)s h_1(a)ds\\
=&\rho u^2\int^{1}_{0}(1-s)s h_1(a)ds\ge0,
\end{align*}
and so complete the proof.
\end{proof}
\begin{remark}{\rm
In Lemma \ref{lem42}, we only obtain $B_mm\ge 2B(\rho,m)\ge0$. Actually, it does not matter whether the coefficient is 2 or 3. It just needs to be larger than 1. In addition, we can obtain \eqref{eta^*} with $A_mm$ replaced by $B_mm$, since $C_1$ and $C_2$ are the same in both cases.}
\end{remark}

Before making the extension, we recall Lemma 3.7 of \cite{GEN3}.

\begin{lemma}\label{lem7GEN3}\cite{GEN3}
For $\nu\in(0,1)$, $1<\gamma<3$ and any $\beta_0>0$, it holds that
\begin{equation}\label{omegabeta}
\begin{aligned}
&(1+t)^\omega \int \eta_e d x+\left(1-\frac{\omega}{2}\right) \int_0^t \int(1+\tau)^{\omega-\lambda} \frac{m^2}{\rho} d x d \tau \\
& \leq C+C \int_0^t \int(1+\tau)^{\beta_0}|\rho-\bar{\rho}|^{\gamma+1} d x d \tau,
\end{aligned}
\end{equation}
where $\eta_e$ is defined in \eqref{eta_e} and $\omega<\min \left\{\frac{(\gamma-1)(\nu+1)}{\gamma+1}, \frac{(\gamma-1)(1+\beta_0)}{\gamma}\right\}$.
\end{lemma}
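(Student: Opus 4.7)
The plan is to derive \eqref{omegabeta} by multiplying the mechanical--energy dissipation inequality by the time weight $(1+t)^\omega$ and then absorbing the weight-derivative error through a two-step splitting (a convex decomposition of $\rho^\gamma$ followed by a Young inequality tuned to mass conservation). Throughout, I read the exponent $\lambda$ appearing in the stated dissipation as the damping exponent $\nu$, since the only time weight in \eqref{eta_e} is $(1+t)^{-\nu}$.

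Starting from \eqref{eta_e} and integrating in $x$ over $\mathbb{R}$ (the flux $q_{ex}$ drops out since $|m|\le C\rho$ from Theorem~\ref{thm2} combined with $\rho\in L^1$ supplies the needed spatial decay), one obtains
$$
\frac{d}{dt}\!\int\!\eta_e\,dx + (1+t)^{-\nu}\!\int\!\frac{m^2}{\rho}\,dx \le 0.
$$
Multiplying by $(1+t)^\omega$ and commuting the derivative with the weight gives
$$
\frac{d}{dt}\!\Bigl[(1+t)^\omega\!\int\!\eta_e\,dx\Bigr] + (1+t)^{\omega-\nu}\!\int\!\frac{m^2}{\rho}\,dx \le \omega(1+t)^{\omega-1}\!\int\!\eta_e\,dx.
$$
Decomposing $\eta_e=\tfrac{m^2}{2\rho}+\tfrac{\kappa}{\gamma-1}\rho^\gamma$ and exploiting $\omega-1\le\omega-\nu$ (valid since $\nu<1$), the kinetic piece $\tfrac{\omega}{2}(1+t)^{\omega-1}\!\int\!\tfrac{m^2}{\rho}dx$ is absorbed into the dissipation, producing exactly the factor $(1-\omega/2)$; only $\tfrac{\kappa\omega}{\gamma-1}(1+t)^{\omega-1}\!\int\!\rho^\gamma dx$ remains to be controlled.

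Next, apply the convex bound $\rho^\gamma\le 2^{\gamma-1}\bigl(|\rho-\bar\rho|^\gamma+\bar\rho^\gamma\bigr)$. Lemma~\ref{lem4} with $\beta_1=\gamma,\beta_2=0,p=1$ gives $\int\bar\rho^\gamma dx\le C(1+t)^{-(\gamma-1)(1+\nu)/(\gamma+1)}$, and the corresponding time integral $\int_0^t\!\omega(1+\tau)^{\omega-1-(\gamma-1)(1+\nu)/(\gamma+1)}d\tau$ is uniformly bounded precisely when $\omega<\tfrac{(\gamma-1)(1+\nu)}{\gamma+1}$---the first constraint on $\omega$. For the $|\rho-\bar\rho|^\gamma$ piece, apply Young's inequality to the product $(1+t)^{\omega-1}|\rho-\bar\rho|^{\gamma-1}$ with conjugate exponents $p=\gamma/(\gamma-1)$ and $q=\gamma$:
$$
(1+t)^{\omega-1}|\rho-\bar\rho|^{\gamma-1}\le \epsilon(1+t)^{\beta_0}|\rho-\bar\rho|^{\gamma} + C_\epsilon(1+t)^{\gamma(\omega-1)-(\gamma-1)\beta_0}.
$$
Multiplying by $|\rho-\bar\rho|$ and integrating in $x$ furnishes on the one hand the $\epsilon(1+t)^{\beta_0}\!\int|\rho-\bar\rho|^{\gamma+1}dx$ term on the right-hand side of \eqref{omegabeta}, and on the other hand a residual $2MC_\epsilon(1+t)^{\gamma(\omega-1)-(\gamma-1)\beta_0}$ after using the mass-conservation bound $\|\rho-\bar\rho\|_{L^1}\le 2M$; this residual is time-integrable exactly when $\omega<\tfrac{(\gamma-1)(1+\beta_0)}{\gamma}$, which is the second constraint.

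Assembling these estimates and integrating the resulting differential inequality from $0$ to $t$, using $\int\eta_e(x,0)\,dx<\infty$ (from $|m_0|\le C\rho_0$ and $\rho_0\in L^1\cap L^\infty$), delivers \eqref{omegabeta}. The main obstacle is the Young step: the naive factorization $|\rho-\bar\rho|^\gamma\cdot 1$ would leave a residual that is constant in $x$ and therefore not integrable over $\mathbb{R}$. Rewriting $|\rho-\bar\rho|^\gamma=|\rho-\bar\rho|\cdot|\rho-\bar\rho|^{\gamma-1}$ so that mass conservation supplies the required $L^1(\mathbb{R})$ integrability of the leftover factor is the decisive move; it is also what produces precisely the sharp threshold $\omega<\tfrac{(\gamma-1)(1+\beta_0)}{\gamma}$ appearing in the statement.
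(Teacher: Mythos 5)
The paper states this lemma without proof, citing \cite{GEN3}, and your reconstruction is correct and follows essentially the argument used there: multiply the integrated entropy inequality by $(1+t)^\omega$, absorb the kinetic half of $\eta_e$ into the dissipation to produce the factor $1-\omega/2$ (using $\nu<1$), control $\int\bar\rho^\gamma\,dx$ via the Barenblatt decay to get the first threshold on $\omega$, and split $(1+t)^{\omega-1}|\rho-\bar\rho|^\gamma$ by Young's inequality against $(1+t)^{\beta_0}|\rho-\bar\rho|^{\gamma+1}$ with the leftover factor made $L^1(\mathbf{R})$-integrable through mass conservation, which yields the second threshold. Your reading of $\lambda$ as $\nu$ in \eqref{omegabeta} is also the intended one, consistent with \eqref{omeganu}.
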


Next, we recall Lemmas 3.6,3.8 and 3.9 of \cite{GEN3} as follows.

\begin{lemma}\cite{GEN3}
For $\nu\in(0,1)$, $1<\gamma<3$, it holds that
\begin{align*}
& (1+t)^{\mu_1(\varepsilon)} \int|\rho-\bar{\rho}|^{\gamma+1} d x+(1+t)^{\mu_1(\varepsilon)} \int m^2 d x \\
& +\int_0^t \int(1+\tau)^{\mu_1(\varepsilon)-\nu} y_{\tau}^2 d x d \tau  +\int_0^t \int(1+\tau)^{\mu_1(\varepsilon)-\nu} A d x d \tau \leq C,
\end{align*}
and
\begin{equation}\label{theta1}
\begin{aligned}
& (1+t)^{\theta_1(\varepsilon)-\nu} \int y^2 d x+\int_0^t \int(1+\tau)^{\theta_1(\varepsilon)}\left(\rho^\gamma-\bar{\rho}^\gamma\right)(\rho-\bar{\rho}) d x d \tau \\
& +\int_0^t \int(1+\tau)^{\theta_1(\varepsilon)} m^2 d x d \tau \\
& +\left(\nu-\theta_1(\varepsilon)\right) \int_0^t \int(1+\tau)^{\theta_1(\varepsilon)-\nu-1} y^2 d x d \tau \leq C,
\end{aligned}
\end{equation}
where for any small $\varepsilon>0$,
$$
\mu_1(\varepsilon)=\min \left\{1,1+\frac{\nu}{2}-\frac{\nu+1}{2(\gamma+1)}\right\}-\varepsilon:=\tilde{\mu}_1-\varepsilon,
$$
and
$$\theta_1(\varepsilon)=\min \left\{\tilde{\mu}_1-\nu, \nu, \frac{\gamma-\nu}{\gamma+1}\right\}-\varepsilon=: \tilde{\theta}_1-\varepsilon.$$

Furthermore, it holds that
\begin{equation}\label{muk}
\begin{gathered}
(1+t)^{\mu_{k+1}(\varepsilon)} \int|\rho-\bar{\rho}|^{\gamma+1} d x+(1+t)^{\mu_{k+1}(\varepsilon)} \int m^2 d x \\
+\int_0^t \int(1+\tau)^{\mu_{k+1}(\varepsilon)-\nu} y_{\tau}^2 d x d \tau \\
+\int_0^t \int(1+\tau)^{\mu_{k+1}(\varepsilon)-\nu} A d x d \tau \leq C,
\end{gathered}
\end{equation}
where
\begin{equation}\label{muk+1}
\mu_{k+1}(\varepsilon)=\tilde{\mu}_{k+1}-\varepsilon=\min \left\{1+\tilde{\theta}_k, 1+\frac{\nu}{2}-\frac{\nu+1}{2(\gamma+1)}+\frac{\tilde{\theta}_k}{2}\right\}-\varepsilon, \quad \forall k \in \mathbf{N},
\end{equation}
and
\begin{equation}\label{thetak}
\tilde{\theta}_k=\min \left\{\tilde{\mu}_k-\nu, \nu, \frac{\gamma-\nu}{\gamma+1}\right\},
\end{equation}
are increasing sequences with $\tilde{\theta}_0=0$.
\end{lemma}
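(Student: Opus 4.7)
The plan is to run a bootstrap between two weighted-in-time energy identities: the relative-entropy identity obtained by integrating \eqref{eta^*} over $\mathbf{R}$, which controls $\|\rho-\bar{\rho}\|_{L^{\gamma+1}}^{\gamma+1}$, $\|m\|_{L^2}^2$ and the dissipation of $y_\tau^2$ and $A$; and a wave-equation identity obtained by multiplying \eqref{ytt} by $y$ and integrating, which controls $\|y\|_{L^2}^2$ through the genuine dissipation $(\rho^\gamma-\bar\rho^\gamma)(\rho-\bar\rho)$. The parameters $\tilde\mu_k$ and $\tilde\theta_k$ measure the currently known decay of each quantity, and the recursion \eqref{muk+1}--\eqref{thetak} reflects that the source terms appearing in one identity are fed by the other's decay.

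For the base step producing $\mu_1(\varepsilon)$, integrate \eqref{eta^*} in $x$, multiply by $(1+t)^{\tilde\mu_1}$ and integrate in $t$. Since Lemma~\ref{lem32} gives $\eta^*\sim|\rho-\bar\rho|^{\gamma+1}+m^2+A$, the left-hand side reproduces the four quantities in the first estimate. The price to pay is a weight-derivative term of order $\tilde\mu_1(1+\tau)^{\tilde\mu_1-1}\int\eta^*$ together with source terms $(1+\tau)^{\tilde\mu_1}\int[(\bar\rho_x^\gamma y)_t + \bar\rho_t^\gamma y_x]$. The first is absorbed by invoking Lemma~\ref{lem7GEN3} with $\beta_0=\tilde\mu_1-1$, which forces $\tilde\mu_1\le 1$ and $\tilde\mu_1\le 1+\nu/2-(\nu+1)/(2(\gamma+1))$; the second is handled by integrating by parts in $t$, applying Young's inequality with weight $\bar\rho^{\gamma}$ and estimating the Barenblatt factors via Lemma~\ref{lem4}.

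For the $y^2$ estimate producing $\theta_1(\varepsilon)$, multiply \eqref{ytt} by $(1+\tau)^{\tilde\theta_1-\nu}y$ and integrate. The damping term $\frac{\alpha}{(1+\tau)^\nu}y_t\cdot(1+\tau)^{\tilde\theta_1-\nu}y$ reshuffles into $\partial_t\bigl[\tfrac{\alpha}{2}(1+\tau)^{\tilde\theta_1-\nu}y^2\bigr]$ plus a remainder of size $|\tilde\theta_1-\nu|(1+\tau)^{\tilde\theta_1-\nu-1}y^2$, giving the fourth quantity in \eqref{theta1}; the pressure term after integration by parts yields $(1+\tau)^{\tilde\theta_1}(\rho^\gamma-\bar\rho^\gamma)(\rho-\bar\rho)$, the kinetic term yields $(1+\tau)^{\tilde\theta_1}m^2$, and the forcing $-\bar R\cdot y$ is bounded by Cauchy--Schwarz together with \eqref{rhodelta}. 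The three competing constraints in \eqref{thetak} arise respectively from: matching the time weight of $m^2$ to the dissipation in the base estimate ($\tilde\theta_1\le\tilde\mu_1-\nu$); forcing $\tilde\theta_1-\nu\le 0$ so the weight-derivative remainder is integrable; and matching the decay of $\bar R^2$ from Lemma~\ref{lem4} ($\tilde\theta_1\le(\gamma-\nu)/(\gamma+1)$).

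For the inductive step producing $\mu_{k+1}$, feed the improved bound $\int y^2\,dx\le C(1+t)^{\nu-\tilde\theta_k}$ back into the relative-entropy identity. The gain comes through the source terms $(\bar\rho_x^\gamma y)_t$ and $\bar\rho_t^\gamma y_x=-\bar\rho_t^\gamma w$: splitting by Young's inequality, one branch is absorbed into the leading $|\rho-\bar\rho|^{\gamma+1}$ term and produces the constraint $\tilde\mu_{k+1}\le 1+\tilde\theta_k$, while the other branch, involving the Barenblatt-weighted $L^2$ norm of $y$ and the decay factor from Lemma~\ref{lem4}, produces the constraint $\tilde\mu_{k+1}\le 1+\nu/2-(\nu+1)/(2(\gamma+1))+\tilde\theta_k/2$. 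The main technical obstacle will be calibrating these Young-inequality splits so that the weight-derivative remainder $\tilde\mu_{k+1}(1+\tau)^{\tilde\mu_{k+1}-1}\int\eta^*$ remains absorbable after using \eqref{omegabeta} with the shifted $\beta_0$; once this calibration is done, induction on $k$ starting from $\tilde\theta_0=0$ (which reproduces the base case) closes the recursion.
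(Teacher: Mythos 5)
This lemma is imported verbatim from \cite{GEN3} (Lemmas 3.6, 3.8 and 3.9 there); the present paper offers no proof of it, only the surrounding machinery (\eqref{eta^*}, \eqref{ytt}, Lemmas \ref{lem7GEN3} and \ref{lem4}), so there is no in-paper proof to compare against. Your outline --- iterating between the time-weighted relative-entropy inequality \eqref{eta^*}, which produces the $\mu_k$-estimates, and the identity obtained by pairing \eqref{ytt} with $(1+\tau)^{\theta_k-\nu}y$, which produces the $\theta_k$-estimates, with each estimate's source terms fed by the other's decay --- is exactly the strategy of \cite{GEN3} as recalled here, and your attribution of the three constraints in \eqref{thetak} and the two in \eqref{muk+1} to their respective error terms is consistent with that argument. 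One imprecision: Lemma \ref{lem7GEN3} requires $\beta_0>0$, while your proposed choice $\beta_0=\tilde\mu_1-1\le 0$ violates this; the way the present paper uses \eqref{omegabeta} suggests the correct choice is $\beta_0=\tilde\theta_k$ (with the $k=0$ base step handled by the unweighted entropy bound), and in any case Lemma \ref{lem7GEN3} serves to control the $\frac{m^2}{\rho}$ contribution to the weight-derivative term rather than the whole of $\int\eta^*$.
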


Now, we are ready to extend the related results in \cite{GEN3} to $\gamma\in(1,+\infty)$.

\begin{theorem}\label{thm3}
Under the conditions of Theorem \ref{thm1}, let $\nu\in[0,1)$ and $\gamma>1$. Then it holds that
\begin{equation}\label{lem9gen3}
\begin{aligned}
& (1+t)^{\mu^{*}(\varepsilon)} \int|\rho-\bar{\rho}|^{\gamma+1} d x
+  \int_0^t \int(1+\tau)^{\mu^{*}(\varepsilon)-\nu} y_{\tau}^2 d x d \tau\\
&+\int_0^t \int(1+\tau)^{\theta^{*}}\left(\rho^{\gamma}-\bar{\rho}^{\gamma}\right)(\rho-\bar{\rho})dxd\tau \leq C,
\end{aligned}
\end{equation}
where
\begin{eqnarray}\label{1.55}
\mu^{*}(\varepsilon):=\left\{\begin{array}{ll}
1+\nu-\frac{\nu+1}{2(\gamma+1)}-\varepsilon, & \nu \in\left[0, \frac{\gamma}{\gamma+2}\right], \\
\frac{3}{2}+\frac{\nu}{2}-\frac{\nu+1}{\gamma+1}-\varepsilon, & \nu \in\left[\frac{\gamma}{\gamma+2}, 1\right),
\end{array} \right.
\end{eqnarray}
and
\begin{eqnarray}\label{1.56}
\theta^{*}:=\left\{\begin{array}{ll}
\nu , & \nu \in\left[0, \frac{\gamma}{\gamma+2}\right], \\
\frac{\gamma-\nu}{\gamma+1}, & \nu \in\left[\frac{\gamma}{\gamma+2}, 1\right).
\end{array} \right.
\end{eqnarray}

\end{theorem}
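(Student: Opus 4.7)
The plan is to replicate the bootstrap iteration of \cite[Lemmas 3.6, 3.8, 3.9]{GEN3}, but with the weak entropy $\tilde\eta$ used there replaced by the entropy $\eta$ of Lemma \ref{lem42}, whose decomposition $\eta=C_1\rho^{\gamma+1}+C_2 m^2+B(\rho,m)$ with $B_m m\ge 2B\ge 0$ is valid on the entire range $\gamma\in(1,+\infty)$. Because $\eta$ and $\tilde\eta$ share the quadratic core $C_1\rho^{\gamma+1}+C_2 m^2$ and the constants $C_1,C_2$ are identical, the relative entropy
$$
\eta^{*}:=\eta-C_1\bar\rho^{\gamma+1}-C_1(\gamma+1)\bar\rho^{\gamma}(\rho-\bar\rho)
$$
satisfies an inequality of the exact form \eqref{eta^*} with $A_m m$ replaced by the nonnegative $B_m m$; since only the sign of this dissipation term is used in subsequent estimates, the replacement of the constant $3$ by $2$ is immaterial.

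The base step mirrors \cite[Lemma 3.6]{GEN3}: integrate the $\eta^{*}$ inequality over $\mathbf{R}$, weight by $(1+t)^{\tilde\mu_1-\varepsilon}$ with $\tilde\mu_1=\min\{1,\,1+\tfrac{\nu}{2}-\tfrac{\nu+1}{2(\gamma+1)}\}$, control the forcing $\bar\rho_x^\gamma$ and $\bar\rho_t^\gamma$ via Lemma \ref{lem4}, and apply Lemma \ref{lem32} together with the elementary inequality $\rho^{\gamma-1}+\bar\rho^{\gamma-1}\ge c|\rho-\bar\rho|^{\gamma-1}$ to obtain $\rho^{\gamma+1}-\bar\rho^{\gamma+1}-(\gamma+1)\bar\rho^{\gamma}(\rho-\bar\rho)\ge c|\rho-\bar\rho|^{\gamma+1}$. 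The absorbed remainder $\int_0^t\!\int(1+\tau)^{\beta_0}|\rho-\bar\rho|^{\gamma+1}dxd\tau$ is handled via Lemma \ref{lem7GEN3}, whose extension to $\gamma\ge 3$ and to $\nu=0$ is verified first. Next, paralleling \cite[Lemma 3.8]{GEN3}, I multiply the wave equation \eqref{ytt} by $y(1+t)^{\tilde\theta_1-\nu-\varepsilon}$ and integrate by parts, using $(\rho^{\gamma}-\bar\rho^{\gamma})(\rho-\bar\rho)\ge|\rho-\bar\rho|^{\gamma+1}$ from Lemma \ref{lem32} to derive the $y$-estimate \eqref{theta1} with $\tilde\theta_1=\min\{\tilde\mu_1-\nu,\,\nu,\,\tfrac{\gamma-\nu}{\gamma+1}\}$; the cross-term $\int(1+\tau)^{\tilde\theta_1-\nu-\varepsilon}y_\tau^2\,dxd\tau$ is absorbed using the first estimate.

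Iteration then proceeds along \eqref{muk}--\eqref{thetak}. Both $\tilde\mu_k$ and $\tilde\theta_k$ are monotone increasing and bounded, hence convergent; their limits $\tilde\mu^{*},\tilde\theta^{*}$ solve the fixed-point system
$$
\tilde\theta^{*}=\min\!\left\{\tilde\mu^{*}-\nu,\,\nu,\,\tfrac{\gamma-\nu}{\gamma+1}\right\},\qquad \tilde\mu^{*}=\min\!\left\{1+\tilde\theta^{*},\,1+\tfrac{\nu}{2}-\tfrac{\nu+1}{2(\gamma+1)}+\tfrac{\tilde\theta^{*}}{2}\right\}.
$$
A case split at $\nu=\tfrac{\gamma}{\gamma+2}$ (equivalent to $\nu=\tfrac{\gamma-\nu}{\gamma+1}$) solves this system: for $\nu\le\tfrac{\gamma}{\gamma+2}$ one finds $\tilde\theta^{*}=\nu$ and $\tilde\mu^{*}=1+\nu-\tfrac{\nu+1}{2(\gamma+1)}$, while for $\nu\ge\tfrac{\gamma}{\gamma+2}$ one finds $\tilde\theta^{*}=\tfrac{\gamma-\nu}{\gamma+1}$ and $\tilde\mu^{*}=1+\tfrac{(\gamma-1)(\nu+1)}{2(\gamma+1)}=\tfrac{3}{2}+\tfrac{\nu}{2}-\tfrac{\nu+1}{\gamma+1}$, matching \eqref{1.55}--\eqref{1.56} exactly.

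The principal obstacle is the extension of Lemma \ref{lem7GEN3} from $(\gamma,\nu)\in(1,3)\times(0,1)$ to $(1,+\infty)\times[0,1)$. For $\gamma\ge 3$ the threshold $\omega<\tfrac{(\gamma-1)(\nu+1)}{\gamma+1}$ becomes less restrictive as $\gamma$ grows, so the binding constraint stays $\omega<\tfrac{(\gamma-1)(1+\beta_0)}{\gamma}$, which tolerates the arbitrarily small $\varepsilon$ loss. The endpoint $\nu=0$ is reached by dropping the $(1+t)^{-\nu}$ factor throughout and checking that the decay estimates in Lemma \ref{lem4} still yield convergent time integrals of the correct order. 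Once these two technical points are settled, the entire iteration of \cite{GEN3} transfers to the broader parameter range, and Theorem \ref{thm3} follows.
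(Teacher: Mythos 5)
Your proposal follows essentially the same route as the paper: both replace the entropy $\tilde{\eta}$ of \cite{GEN3} by the $\eta$ of Lemma \ref{lem42} (whose property $B_mm\ge 2B\ge 0$ holds for all $\gamma>1$, the coefficient $2$ versus $3$ being immaterial) so that the iteration \eqref{muk}--\eqref{thetak} carries over to the full range of $\gamma$, and then identify the limiting exponents. The only cosmetic difference is that you locate the limit by monotone convergence plus a fixed-point computation, whereas the paper pins it down by a two-step contradiction argument showing $\tilde{\theta}_k$ eventually reaches $\frac{\gamma-\nu}{\gamma+1}$; both yield \eqref{1.55}--\eqref{1.56}.
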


\begin{proof}
We pay attention to the case $ \nu \in\left[\frac{\gamma}{\gamma+2}, 1\right)$. The other case $\nu \in\left(0, \frac{\gamma}{\gamma+2}\right]$ has been dealt with in \cite{GEN3}; For case $\nu=0$, see \cite{HUA4}(with $1<\gamma<3$) and \cite{GEN1}(with $\gamma\ge2$).

It follows from \eqref{muk+1} and \eqref{thetak} that
\begin{equation}\label{tildemuk}
\begin{gathered}
\tilde{\mu}_{k+1}= \begin{cases}1+\tilde{\theta}_k, & \tilde{\theta}_k \in\left(-\infty, \frac{\nu \gamma-1}{\gamma+1}\right], \\
1+\frac{\nu}{2}-\frac{\nu+1}{2(\gamma+1)}+\frac{\tilde{\theta}_k}{2}, & \tilde{\theta}_k \in\left(\frac{\nu \gamma-1}{\gamma+1}, \infty\right),\end{cases}
\end{gathered}
\end{equation}
and
\begin{equation}\label{tildathetak}
\tilde{\theta}_k=\min \left\{\tilde{\mu}_k-\nu, \frac{\gamma-\nu}{\gamma+1}\right\}, \quad \nu \in\left[, \frac{\gamma}{\gamma+2},1\right),
\end{equation}
since $\nu\ge \frac{\gamma-\nu}{\gamma+1}$ provided that $\nu \in\left[ \frac{\gamma}{\gamma+2},1\right)$.

First, we claim that there exists $k_1 \in \mathbf{N}$ such that for any $k \geq k_1$,
\begin{equation}\label{tildathetak'}
\tilde{\theta}_k \geq \tilde{\theta}_{k_1}>\frac{\nu \gamma-1}{\gamma+1} .
\end{equation}
If not, it follows from \eqref{tildathetak} that
\begin{equation}\label{0tildek}
0<\tilde{\theta}_k=\tilde{\mu}_k-\nu \leq \frac{\nu \gamma-1}{\gamma+1}<\frac{\gamma-\nu}{\gamma+1}, \quad \forall k \in \mathbf{N},
\end{equation}
Combining this and \eqref{tildemuk} shows
$$
\tilde{\mu}_{k+1}=1+\tilde{\theta}_k=1+\tilde{\mu}_k-\nu=k(1-\nu)+\tilde{\mu}_1 \rightarrow+\infty, ~ \text { as } ~ k \rightarrow+\infty ,
$$
which contradicts \eqref{0tildek}, and hence \eqref{tildathetak'} holds.

Next, we claim that there exists $k_2 \geq k_1$ such that
\begin{equation}\label{tildemuk2}
\tilde{\mu}_{k_2}-\nu \geq \frac{\gamma-\nu}{\gamma+1} .
\end{equation}
If not, then
\begin{equation}\label{tildethetak2}
\tilde{\theta}_k=\tilde{\mu}_k-\nu<\frac{\gamma-\nu}{\gamma+1}, \quad \forall k \geq k_1.
\end{equation}
Noting the above inequality, \eqref{tildemuk} and \eqref{tildathetak'}, we deduce that
\begin{align*}
\tilde{\mu}_{k+1}-\nu&=1+\frac{\nu}{2}-\frac{\nu+1}{2(\gamma+1)}+\frac{\tilde{\mu}_k-\nu}{2}-\nu\\
&=\left(1+\frac{1}{2}\right)\left(1+\frac{\nu}{2}-\frac{\nu+1}{2(\gamma+1)}-\nu\right)+\frac{1}{2^2}(\tilde{\mu}_{k-1}-\nu)\\
&=\left(1+\frac{1}{2}+\cdot\cdot\cdot+\frac{1}{2^{k-k_1}}\right)\left(1+\frac{\nu}{2}-\frac{\nu+1}{2(\gamma+1)}-\nu\right)+
\frac{1}{2^{k+1-k_1}}(\tilde{\mu}_{k_1}-\nu)\\
&\ge \left(1+\frac{1}{2}+\cdot\cdot\cdot+\frac{1}{2^{k-k_1}}\right)\left(1-\frac{\nu}{2}-\frac{\nu+1}{2(\gamma+1)}\right).
\end{align*}
Since
\begin{align*}
1-\frac{\nu}{2}-\frac{\nu+1}{2(\gamma+1)}\ge \frac{1}{2}-\frac{1}{\gamma+1}=\frac{\gamma-1}{2(\gamma+1)}>0,
\end{align*}
due to $\gamma>1$, we know that for $k_3$ sufficiently large, it holds that $$\tilde{\theta}_{k_3}=\tilde{\mu}_{k_3+1}-\nu>\frac{\gamma-\nu}{\gamma+1},$$
which contradicts \eqref{tildethetak2} and consequently \eqref{tildemuk2} holds.

Hence, from \eqref{tildathetak} and \eqref{tildemuk2}, it follows that
$$
\lim_{k\rightarrow\infty}\tilde{\theta}_k=\frac{\gamma-\nu}{\gamma+1},
$$
which, together with \eqref{tildemuk} and \eqref{tildathetak'}, justifies the conclusion.
\end{proof}

\begin{remark}{\rm
Based on \eqref{theta1} and \eqref{muk}, we added the last term on the left-side of \eqref{lem9gen3}, which is essential in  estimating $I_1$ and $I_3$ ($\hat{I}_1$ and $\hat{I}_3$) defined in the next section, since
\begin{align*}
\int_0^t \int(1+\tau)^{\theta^{*}}|\rho-\bar{\rho}|^{\gamma+1} d xd\tau\leq C\int_0^t \int(1+\tau)^{\theta^{*}}\left|\rho^{\gamma}-\bar{\rho}^{\gamma}\right||\rho-\bar{\rho}|dxd\tau \leq C
\end{align*}
is better than
$$
(1+t)^{\mu^{*}(\varepsilon)} \int|\rho-\bar{\rho}|^{\gamma+1} d x\leq C,
$$
by
$$
\mu^{*}(\varepsilon)< \theta^{*}+1.
$$}
\end{remark}

Observing \eqref{omegabeta} and \eqref{lem9gen3}, and choosing $\beta_0=\theta^*$, we obtain
\begin{equation}\label{omeganu}
(1+t)^\omega \int \eta_e d x+\left(1-\frac{\omega}{2}\right) \int_0^t \int(1+\tau)^{\omega-\nu} \frac{m^2}{\rho} d x d \tau\leq C,
\end{equation}
with $0<\omega<\min \left\{\frac{(\gamma-1)(\lambda+1)}{\gamma+1}, \frac{(\gamma-1)(1+\theta^*)}{\gamma}\right\}$. Next, we will show
\begin{equation}\label{omegwtheta}
\frac{(\gamma-1)(\nu+1)}{\gamma+1}<\frac{(\gamma-1)(1+\theta^*)}{\gamma},
\end{equation}
for any $\nu\in[0,1)$. In the case $\nu \in\left[0, \frac{\gamma}{\gamma+2}\right]$, the above inequality is obvious, since $\theta^*=\nu$. For $\nu \in\left[\frac{\gamma}{\gamma+2}, 1\right)$,
\begin{align*}
\frac{1+\theta^*}{\gamma}-\frac{1+\nu}{\gamma+1}=\frac{2\gamma+1-\nu}{\gamma(\gamma+1)}-\frac{1+\nu}{\gamma+1}=\frac{1-\nu}{\gamma}>0,
\end{align*}
which implies that \eqref{omegwtheta} still holds for $\nu \in\left[\frac{\gamma}{\gamma+2}, 1\right)$. Hence, for any $\varepsilon>0$, we can obtain \eqref{omeganu} with
\begin{equation}\label{omegavar}
\omega=\frac{(\gamma-1)(\nu+1)}{\gamma+1}-\varepsilon,
\end{equation}
which is helpful in the estimate of $I_4$($\hat{I}_4$) defined below.

\section{Proof of the main result}

In this section, we detail the proof of Theorem \ref{thm1}. First, to obtain the $L^{\gamma}$ estimates, we set $v=(\rho, m)^t, \bar{v}=(\bar{\rho}, \bar{m})^t$, where $(\cdot, \cdot)^t$ means the transpose of the vector $(\cdot, \cdot)$, and define
\begin{equation}\label{eta_*}
\begin{aligned}
& \eta_*=\eta(v)-\eta(\bar{v})-\nabla \eta(\bar{v})(v-\bar{v})=\frac{1}{2}Q_*+\frac{\kappa}{\gamma-1}P_*, \\
& q_*=q(v)-q(\bar{v})-\nabla \eta(\bar{v})(f(v)-f(\bar{v})),
\end{aligned}
\end{equation}
where $\eta$ and $q$ are weak convex entropy-flux pair defined in \eqref{etaq} with $g(\xi)=\frac{1}{2}|\xi|^2$,
$$f(v)=\left(m, \frac{m^2}{\rho}\right)^t, f(\bar{v})=\left(\bar{m}, \frac{\bar{m}^2}{\bar{\rho}}\right)^t ,$$
and
\begin{equation}\label{pQ}
\begin{aligned}
P_*&=\rho^{\gamma}-\bar{\rho}^{\gamma}-\gamma\bar{\rho}^{\gamma-1}(\rho-\bar{\rho})\ge0,\\
Q_*&=\frac{m^2}{\rho}-\frac{\bar{m}^2}{\bar{\rho}}+\frac{\bar{m}^2}{\bar{\rho}^2}(\rho-\bar{\rho})-\frac{2\bar{m}}{\bar{\rho}}(m-\bar{m})\ge0,
\end{aligned}
\end{equation}
due to the convexity of $\rho^{\gamma}$ and $\frac{m^2}{\rho}$.

According to (3.8) of \cite{CUI2}, we have the following lemma describing the properties of $\eta_*, q_*$ and $P_*, Q_*$
\begin{lemma}\label{lem51}
Under the conditions of Theorem \ref{thm1}, let $g(\xi)=\frac{1}{2}|\xi|^2$. Then, it holds for any $t>0$ and $1<\gamma<+\infty$ that
\begin{equation}\label{detadt'}
\begin{aligned}
\eta_{*t} +q_{*x}+\frac{Q_{*}}{(1+t)^{\nu}}
\leq\frac{\bar{m}}{\bar{\rho}}\frac{\bar{R}}{\bar{\rho}}\left(\rho-\bar{\rho}\right)-\frac{\bar{R}}{\bar{\rho}}\left(m-\bar{m}\right)+
\frac{\bar{m}}{\bar{\rho}}(P_*+Q_*)_x.
\end{aligned}
\end{equation}
\end{lemma}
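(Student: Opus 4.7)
The plan is to carry out a Dafermos-type relative entropy computation, combining the weak entropy inequality for $v=(\rho,m)$ with the exact entropy identity for the smooth Barenblatt state $\bar v=(\bar\rho,\bar m)$ solving (4.3), and handling the linear correction $\nabla\eta(\bar v)(v-\bar v)$ in $\eta_*$ via Leibniz.

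Concretely, the first step is to apply Definition~2.1 to the weak entropy pair $(\eta,q)$ coming from $g(\xi)=\tfrac12|\xi|^2$, obtaining an entropy inequality of the form $\eta(v)_t+q(v)_x+\text{(damping dissipation)}\le 0$ in distributions. For the smooth profile $\bar v$, the chain rule together with the compatibility $\nabla q=\nabla\eta\nabla f$ and the equation (4.3) yields the exact identity $\eta(\bar v)_t+q(\bar v)_x=\nabla\eta(\bar v)\bar g$, where $\bar g=(0,-\bar m/(1+t)^\nu+\bar R)^t$. I would then subtract and apply Leibniz to $[\nabla\eta(\bar v)(v-\bar v)]_t$ and $[\nabla\eta(\bar v)(f(v)-f(\bar v))]_x$: the ``mixed'' piece collapses to $\nabla\eta(\bar v)(g-\bar g)$ by the PDEs for $v$ and $\bar v$, while the ``coefficient-derivative'' pieces $[\nabla\eta(\bar v)]_t(v-\bar v)$ and $[\nabla\eta(\bar v)]_x(f(v)-f(\bar v))$ are to be processed using the explicit formula $\nabla\eta(\bar v)=(-\bar u^2/2+\tfrac{\kappa\gamma}{\gamma-1}\bar\rho^{\gamma-1},\,\bar u)$, the continuity equation $\bar\rho_t+\bar m_x=0$, and the momentum identity $\bar u_t+\bar u\bar u_x+p(\bar\rho)_x/\bar\rho=-\bar u/(1+t)^\nu+\bar R/\bar\rho$ that follows from (4.3).

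The algebraic identities $Q_*=\rho(u-\bar u)^2$ and $\rho(u-\bar u)=(m-\bar m)-\bar u(\rho-\bar\rho)$ drive the assembly. The damping contributions regroup into $-Q_*/(1+t)^\nu$ plus a $\bar u$-weighted remainder that cancels precisely against the $\bar u/(1+t)^\nu$ portion of $\bar u_t+\bar u\bar u_x+p(\bar\rho)_x/\bar\rho$. The surviving $\bar R/\bar\rho$-factor in $\bar u_t$ produces the two source terms $\bar u\bar R(\rho-\bar\rho)/\bar\rho-\bar R(m-\bar m)/\bar\rho$ on the right-hand side, while the $\bar u_x$-factors together with the $\kappa\gamma\bar\rho^{\gamma-1}\bar\rho_x$ pressure pieces are to be reshuffled into the quasi-flux $\bar u(P_*+Q_*)_x$ using the very definitions $P_*=\rho^\gamma-\bar\rho^\gamma-\gamma\bar\rho^{\gamma-1}(\rho-\bar\rho)$ and $Q_*=\frac{m^2}{\rho}-\frac{\bar m^2}{\bar\rho}+\bar u^2(\rho-\bar\rho)-2\bar u(m-\bar m)$.

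The principal obstacle is this last regrouping: identifying the specific linear combination $\bar u(P_*+Q_*)_x$ among scattered terms of the form $\bar u_x\rho^\gamma$, $\bar\rho_x m$ and $\bar u\bar u_x\rho$. The Darcy identity $p(\bar\rho)_x=-\bar m/(1+t)^\nu$ (which encodes that the non-$\bar R$ part of the $\bar v$-momentum equation is exact) and the Barenblatt identity $\bar u_t+\bar u\bar u_x=\bar R/\bar\rho$ (already established in the proof of Lemma~3.4 in the excerpt) are the two ingredients making the cancellations telescope into the clean form asserted by \eqref{detadt'}.
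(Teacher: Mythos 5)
Your proposal is sound, and it is worth noting that the paper itself offers no proof of this lemma at all: it is quoted verbatim from (3.8) of \cite{CUI2}. So your relative-entropy derivation is not ``the same approach as the paper'' — it supplies the argument the paper delegates to a citation — but it is the natural (and, as far as I can tell, the intended) way to prove it, and the key cancellations you flag as the ``principal obstacle'' do close. Concretely: writing $Q_*=\rho(u-\bar u)^2$ and $\rho(u-\bar u)=(m-\bar m)-\bar u(\rho-\bar\rho)$ as you propose, the damping contribution from the three pieces $\nabla\eta(v)g-\nabla\eta(\bar v)\bar g-\nabla\eta(\bar v)(g-\bar g)$ equals $-(1+t)^{-\nu}\rho u(u-\bar u)=-(1+t)^{-\nu}Q_*-(1+t)^{-\nu}\bar u\,\rho(u-\bar u)$; the second piece cancels against the Darcy relation $\kappa\gamma\bar\rho^{\gamma-2}\bar\rho_x=-\alpha(1+t)^{-\nu}\bar u$ appearing in $[\nabla\eta(\bar v)]_x$ and against the continuity equation in $[\nabla\eta(\bar v)]_t$; the identity $\bar u_t+\bar u\bar u_x=\bar R/\bar\rho$ then produces exactly the two $\bar R$-source terms of \eqref{detadt'}; and the surviving $\bar u_x$-weighted terms sum to
\begin{equation*}
\bar u_x\Bigl[\bar u\rho(u-\bar u)+\bar u(m-\bar m)-\tfrac{m^2}{\rho}+\tfrac{\bar m^2}{\bar\rho}-\bigl(p(\rho)-p(\bar\rho)\bigr)+\kappa\gamma\bar\rho^{\gamma-1}(\rho-\bar\rho)\Bigr]=-\bar u_x\,(P_*+Q_*),
\end{equation*}
which is $\tfrac{\bar m}{\bar\rho}(P_*+Q_*)_x$ up to the exact derivative $[\bar u(P_*+Q_*)]_x$. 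One caveat you should make explicit: since the paper's $q_*$ is built with the truncated flux $f(v)=(m,m^2/\rho)^t$ (pressure omitted), your Leibniz step on $[\nabla\eta(\bar v)(f(v)-f(\bar v))]_x$ leaves behind the extra terms $\bar u\,(p(\rho)-p(\bar\rho))_x+\bar u_x(p(\rho)-p(\bar\rho))$, so the inequality you obtain agrees with \eqref{detadt'} only modulo an exact $x$-derivative (of $\bar u(Q_*-\kappa\gamma\bar\rho^{\gamma-1}(\rho-\bar\rho))$, say). This is harmless, because the lemma is only ever used after integration over $\mathbf{R}$ — indeed the paper immediately integrates $\tfrac{\bar m}{\bar\rho}(P_*+Q_*)_x$ by parts — but a complete write-up should either track this exact derivative or state the inequality in the integrated form.
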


Next, we give the estimate for the relative entropy $\eta_*$.

\begin{theorem}\label{thm4}
Under the conditions of Theorem \ref{thm1}, it holds for any small positive constant $\varepsilon$, $t>0$ and $1<\gamma<+\infty$ that
\begin{equation}\label{phivar}
(1+t)^{\phi(\varepsilon)} \int \eta_* d x +\int^{t}_{0}\int(1+\tau)^{\phi(\varepsilon)-\nu}Q_* dxd\tau\leq C,
\end{equation}
where $\eta_*$ is defined in \eqref{eta_*}, $Q_*$ is defined in \eqref{pQ} and
\begin{eqnarray}\label{1phi}
\phi(\varepsilon)=\left\{\begin{array}{ll}
\frac{\gamma^2+\gamma-1}{(1+\gamma)^2}(1+\nu)-\varepsilon, & \nu \in\left[0, \frac{\gamma}{\gamma+2}\right], \\
\frac{2\gamma-1-\nu}{\gamma+1}-\varepsilon, & \nu \in\left[\frac{\gamma}{\gamma+2}, 1\right).
\end{array} \right.
\end{eqnarray}

\end{theorem}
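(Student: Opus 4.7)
The plan is to insert the pointwise inequality of Lemma \ref{lem51} into a time-weighted energy estimate. Multiplying \eqref{detadt'} by $(1+t)^{\phi(\varepsilon)}$ and integrating over $\mathbb{R}\times[0,t]$, the flux term $\int \bar u(P_*+Q_*)_x\,dx$ becomes $-\int \bar u_x(P_*+Q_*)\,dx\le 0$ after integration by parts, since $\bar u_x=(1+\nu)/[(\gamma+1)(1+t)]\ge 0$ on the support of $\bar\rho$. The time derivative of $(1+t)^{\phi}$ produces an extra bootstrap term, so one arrives at
\begin{equation*}
(1+t)^{\phi}\int\eta_*\,dx+\int_0^t(1+\tau)^{\phi-\nu}\int Q_*\,dx\,d\tau \le C+\mathcal{R}_0+\mathcal{R}_1+\mathcal{R}_2,
\end{equation*}
with $\mathcal{R}_0=\phi\int_0^t(1+\tau)^{\phi-1}\int\eta_*\,dx\,d\tau$, and $\mathcal{R}_1,\mathcal{R}_2$ the two source integrals on the right of \eqref{detadt'} weighted by $(1+\tau)^{\phi}$. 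The goal is to choose $\phi$ as large as possible so each $\mathcal{R}_i$ is uniformly bounded in $t$.

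For $\mathcal{R}_1$ I would apply Hölder with exponents $(\gamma+1)/\gamma$ and $\gamma+1$, combining the decay of $\|\bar u(\bar R/\bar\rho)\|_{L^{(\gamma+1)/\gamma}}$ (computed from the explicit identity $\bar u(\bar R/\bar\rho)=-\frac{(1+\nu)^2(\gamma-\nu)}{(\gamma+1)^3(1+t)^3}x^2$ via Lemma \ref{lem4}) with the Theorem \ref{thm3} bound $\|\rho-\bar\rho\|_{L^{\gamma+1}}\le C(1+t)^{-\mu^{*}(\varepsilon)/(\gamma+1)}$. For $\mathcal{R}_2$ decompose $m-\bar m=(\rho-\bar\rho)\bar u+\rho(u-\bar u)$ so that $|m-\bar m|^2\le 2\bar u^2(\rho-\bar\rho)^2+2\rho Q_*$; Young's inequality absorbs a small multiple of $\int\rho Q_*\,dx$ into the left-hand side, while the $\bar u^2(\rho-\bar\rho)^2$ piece is handled by $L^1$-$L^{\gamma+1}$ interpolation using mass conservation and Theorem \ref{thm3}, with $\bar u$ bounded pointwise via Lemma \ref{lem4}. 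The bootstrap term $\mathcal{R}_0$ is split using $\eta_*=\tfrac12 Q_*+\tfrac{\kappa}{\gamma-1}P_*$: the $Q_*$ part is handled by writing $(1+\tau)^{\phi-1}=(1+\tau)^{\phi-\nu}(1+\tau)^{\nu-1}$, combined with the bound on $\int Q_*\,dx$ that follows from Lemma \ref{lem7GEN3} and \eqref{omeganu}; the $P_*$ part is treated by Lemma \ref{lem32} for $\gamma\ge 2$, while for $1<\gamma<2$ Lemma \ref{lem31} gives $P_*^{(\gamma+1)/\gamma}\le c^{-1}\bigl[\rho^{\gamma+1}-\bar\rho^{\gamma+1}-(\gamma+1)\bar\rho^{\gamma}(\rho-\bar\rho)\bigr]$, which after Hölder is controlled by the new weighted estimate $\int_0^t(1+\tau)^{\theta^{*}}(\rho^\gamma-\bar\rho^\gamma)(\rho-\bar\rho)\,dx\,d\tau\le C$ of Theorem \ref{thm3}.

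Matching the slowest time decay among $\mathcal{R}_0,\mathcal{R}_1,\mathcal{R}_2$ singles out the threshold $\nu=\gamma/(\gamma+2)$ where the dominant source term switches character, producing the two-branch formula \eqref{1phi}. I anticipate the principal obstacle to be $\mathcal{R}_2$: the factor $\bar R/\bar\rho$ has only moderate decay near the free boundary of $\bar\rho$, while $m-\bar m$ simultaneously couples the kinetic-type quantity $\rho Q_*$ to the density error $(\rho-\bar\rho)^2$, so the sharp exponent $\phi(\varepsilon)$ is obtained only by invoking Lemma \ref{lem31} (in the range $1<\gamma<2$) to control $P_*$ through the $(\rho^\gamma-\bar\rho^\gamma)(\rho-\bar\rho)$ integral, rather than losing ground via the coarser bound $P_*\le C|\rho-\bar\rho|^\gamma$.
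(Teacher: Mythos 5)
Your overall architecture --- multiply \eqref{detadt'} by $(1+t)^{\phi}$, integrate, discard the flux term via $\bar u_x\ge 0$, and control the two source integrals plus the bootstrap term $\phi\int_0^t(1+\tau)^{\phi-1}\int\eta_*$ against the space--time bounds of Theorem \ref{thm3} --- is exactly the paper's. Several of your sub-routes (absorbing the $Q_*$ part of the bootstrap by writing $(1+\tau)^{\phi-1}=(1+\tau)^{\nu-1}(1+\tau)^{\phi-\nu}$ with $\nu<1$; splitting $|m-\bar m|^2\le 2\bar u^2(\rho-\bar\rho)^2+2\rho Q_*$ using $Q_*=\rho(u-\bar u)^2$; using the pointwise $L^{\gamma+1}$ bound instead of the $\theta^*$-weighted space--time integral in $\mathcal{R}_1$) are legitimate alternatives to the paper's term-by-term estimates of $I_2$ and $I_{41}$--$I_{44}$, because all of those terms carry strict slack.

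The gap is in the $P_*$ part of the bootstrap, which is in fact the \emph{only} term with zero slack and hence the term that dictates $\phi(\varepsilon)$ --- you instead single out $\mathcal{R}_2$ as the principal obstacle, but $\mathcal{R}_2$ has room to spare. The paper's estimate of $I_3=\phi\int_0^t(1+\tau)^{\phi-1}\int P_*$ rests on the chain $P_*\le C\bigl[\rho^{\gamma+1}-\bar\rho^{\gamma+1}-(\gamma+1)\bar\rho^{\gamma}(\rho-\bar\rho)\bigr]^{\gamma/(\gamma+1)}\le C\bigl[(\rho^\gamma-\bar\rho^\gamma)(\rho-\bar\rho)\bigr]^{\gamma/(\gamma+1)}$, i.e.\ Lemma \ref{lem31} followed by Lemma \ref{lem32}, applied for \emph{every} $\gamma>1$; Young's inequality against $\int_0^t(1+\tau)^{\theta^*}\int(\rho^\gamma-\bar\rho^\gamma)(\rho-\bar\rho)\,dx\,d\tau\le C$ then produces the exact threshold $\phi=\frac{\gamma}{\gamma+1}(1+\theta^*)-\frac{1+\nu}{(\gamma+1)^2}-\varepsilon$, which is \eqref{1phi}, with margin precisely $-\varepsilon$. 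You reserve Lemma \ref{lem31} for $1<\gamma<2$ and propose to treat $\gamma\ge2$ with Lemma \ref{lem32} alone; but Lemma \ref{lem32} bounds the $(\gamma+1)$-relative quantity, not $P_*=\rho^\gamma-\bar\rho^\gamma-\gamma\bar\rho^{\gamma-1}(\rho-\bar\rho)$, and the upper bounds on $P_*$ available without Lemma \ref{lem31} for $\gamma\ge2$ (e.g.\ $P_*\le C|\rho-\bar\rho|^2$ followed by $L^1$--$L^{\gamma+1}$ interpolation, or $P_*\le C\bigl[(\rho^\gamma-\bar\rho^\gamma)(\rho-\bar\rho)\bigr]^{2/(\gamma+1)}$) lead only to $\phi<\frac{1+\theta^*}{\gamma}$, respectively to an exponent with $2/(\gamma+1)$ in place of $\gamma/(\gamma+1)$; both are strictly smaller than \eqref{1phi} for all $\gamma\ge2$. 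So your plan as written does not reach the claimed rate in the range $\gamma\ge2$: Lemma \ref{lem31} is indispensable there too (it is precisely the ``new perspective'' the paper advertises), whereas the $\gamma\ge2$ versus $1<\gamma<2$ dichotomy only enters afterwards, when the $\eta_*$ decay is converted into the $L^1$ rate via Lemma \ref{lem33}.
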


\begin{proof}
Since the estimates of $|\rho-\bar{\rho}|^{\gamma+1}$ and $|y_t|^2=|m-\bar{m}|^2$ are different due to the different value of $\nu$ according to Theorem \ref{thm3}, we divide the proof into two cases.

\vspace{4pt}
\noindent\textbf{$Case~ I: \nu\in \left[0,\frac{\nu}{\nu+2}\right).$}

Setting
\begin{equation}\label{muvar}
\mu(\varepsilon)=\frac{\gamma^2+\gamma-1}{(\gamma+1)^2}(1+\nu)-\varepsilon,
\end{equation}
multiplying \eqref{detadt'} with $(1+t)^{\mu(\varepsilon)}$ and integrating over $(0,t)\times \mathbf{R}$, we have
\begin{equation}\label{etamuv}
\begin{aligned}
&(1+t)^{\mu(\varepsilon)} \int \eta_* d x +\int^{t}_{0}\int(1+\tau)^{\mu(\varepsilon)-\nu}Q_* dxd\tau\\
\leq&\int^{t}_{0}\int(1+\tau)^{\mu(\varepsilon)}\frac{\bar{m}}{\bar{\rho}}\frac{\bar{R}}{\bar{\rho}}
\left(\rho-\bar{\rho}\right)dxd\tau+\int^{t}_{0}\int(1+\tau)^{\mu(\varepsilon)} \frac{\bar{R}}{\bar{\rho}}\left(m-\bar{m}\right)dxd\tau\\
&+\int^{t}_{0}\int\mu(\varepsilon)(1+\tau)^{\mu(\varepsilon)-1}P_*dxd\tau+
\int^{t}_{0}\int\mu(\varepsilon)(1+\tau)^{\mu(\varepsilon)-1}Q_*d xd\tau+C\\
=&:I_1+I_2+I_3+I_4+C,
\end{aligned}
\end{equation}
where we have used
$$
\int_{-\infty}^{+\infty}\frac{\bar{m}}{\bar{\rho}}(P_*+Q_*)_x=-\int_{-\infty}^{+\infty}\left(\frac{\bar{m}}{\bar{\rho}}\right)_x(P_*+Q_*),
$$
and
$$
\left(\frac{\bar{m}}{\bar{\rho}}\right)_x=\bar{u}_x=\frac{1+\nu}{(1+t)(1+\gamma)}>0.
$$

To estimate $I_1$, we see from \eqref{rhobeta} and \eqref{rhodelta} that
\begin{align*}
\int\frac{\bar{m}}{\bar{\rho}}\frac{\bar{R}}{\bar{\rho}}
\left(\rho-\bar{\rho}\right)dx
&\leq \bigg|\bigg|\frac{\bar{m}}{\bar{\rho}}\bigg|\bigg|_{L^{\frac{2(\gamma+1)}{\gamma}}}
\bigg|\bigg|\frac{\bar{R}}{\bar{\rho}}\bigg|\bigg|_{L^{\frac{2(\gamma+1)}{\gamma}}}
\big|\big|\rho-\bar{\rho}\big|\big|_{L^{\gamma+1}}\\
&\leq C(1+t)^{-\frac{3\gamma^2+3\gamma+1-\nu(3\gamma+2)}{(\gamma+1)^2}}\big|\big|\rho-\bar{\rho}\big|\big|_{L^{\gamma+1}},
\end{align*}
which gives
\begin{equation}\label{I1}
\begin{aligned}
I_1\leq& \int^{t}_{0}(1+\tau)^{\mu(\varepsilon)-\frac{3\gamma^2+3\gamma+1-\nu(3\gamma+2)}{(\gamma+1)^2}}
\big|\big|\rho-\bar{\rho}\big|\big|_{L^{\gamma+1}}d\tau\\
\leq& \int^{t}_{0}\int(1+\tau)^{\theta^*}\big|\rho-\bar{\rho}\big|^{\gamma+1}dxd\tau\\
&+C\int^{t}_{0}(1+\tau)^{\left[\mu(\varepsilon)-\frac{3\gamma^2+3\gamma+1-\nu(3\gamma+2)}{(\gamma+1)^2}-\frac{\theta^*}{\gamma+1}\right]
\frac{\gamma+1}{\gamma}}d\tau\\
\leq& C,
\end{aligned}
\end{equation}
due to \eqref{lem9gen3} and
\begin{align*}
&\mu(\varepsilon)-\frac{3\gamma^2+3\gamma+1-\nu(3\gamma+2)}{(\gamma+1)^2}-\frac{\theta^*}{\gamma+1}+\frac{\gamma}{\gamma+1}\\
=&\frac{\gamma^2+\gamma-1}{(\gamma+1)^2}(1+\nu)-\varepsilon-\frac{2\gamma^2+2\gamma+1-\nu(2\gamma+1)}{(\gamma+1)^2}\\
=&-\frac{\gamma^2+\gamma+2+\nu(\gamma^2-\gamma-2)}{(\gamma+1)^2}-\varepsilon<0,
\end{align*}
by \eqref{1.56} and $\nu\in\left[0,\frac{\gamma}{\gamma+2}\right]$.

As for $I_2$, since $m-\bar{m}=y_t$, we deduce by \eqref{rhodelta} that
\begin{align*}
\int\frac{\bar{R}}{\bar{\rho}}\left(m-\bar{m}\right)dx\leq \bigg|\bigg|\frac{\bar{R}}{\bar{\rho}} \bigg|\bigg|||m-\bar{m}||\leq C(1+t)^{-\frac{4\gamma+1-3\nu}{2(\gamma+1)}}||y_t||,
\end{align*}
which implies
\begin{equation}\label{I2}
\begin{aligned}
I_2\leq& \int^{t}_{0}(1+\tau)^{\mu(\varepsilon)-\frac{4\gamma+1-3\nu}{2(\gamma+1)}}||y_{\tau}||d\tau\\
\leq& \int^{t}_{0}\int(1+\tau)^{\mu^*(\varepsilon)-\nu}y^{2}_{\tau}dxd\tau\\
&+C\int^{t}_{0}(1+\tau)^{2\left[\mu(\varepsilon)-\frac{4\gamma+1-3\nu}{2(\gamma+1)}-\frac{\mu^*(\varepsilon)-\nu}{2}\right]}
d\tau\\
\leq& C,
\end{aligned}
\end{equation}
due to \eqref{lem9gen3} and
\begin{align*}
&\mu(\varepsilon)-\frac{4\gamma+1-3\nu}{2(\gamma+1)}-\frac{\mu^*(\varepsilon)-\nu}{2}+\frac{1}{2}\\
&=\frac{\gamma^2+\gamma-1}{(\gamma+1)^2}(1+\nu)-\frac{\varepsilon}{2}-\frac{8\gamma+1-7\nu}{4(\gamma+1)}\\
&=-\frac{4\gamma^2+5\gamma+5-\nu(4\gamma^2+11\gamma+6)}{4(\gamma+1)^2}-\frac{\varepsilon}{2}<0,
\end{align*}
by \eqref{1.55} and $\nu\in\left[0,\frac{\gamma}{\gamma+2}\right]$.

Then, to estimate $I_3$, using Lemma \ref{lem31} and \ref{lem32}, we first get
\begin{align*}
P_*&=\rho^{\gamma}-\bar{\rho}^{\gamma}-\gamma\bar{\rho}^{\gamma-1}(\rho-\bar{\rho})\\
&\leq C\left[\rho^{\gamma+1}-\bar{\rho}^{\gamma+1}-(\gamma+1)\bar{\rho}^{\gamma}
\left(\rho-\bar{\rho}\right)\right]^{\frac{\gamma}{\gamma+1}}\\
&\leq C\left[\left(\rho^{\gamma}-\bar{\rho}^{\gamma}\right)(\rho-\bar{\rho})\right]^{\frac{\gamma}{\gamma+1}}.
\end{align*}
Hence, it follows from \eqref{rhobeta} that
\begin{align*}
\int\bar{\rho}^{\delta}P_*\bar{\rho}^{-\delta}dx&\leq C\int\bar{\rho}^{\delta}\left[\left(\rho^{\gamma}-\bar{\rho}^{\gamma}\right)
\left(\rho-\bar{\rho}\right)\right]^{\frac{\gamma}{\gamma+1}}\bar{\rho}^{-\delta}dx\\
&\leq C\left(\int\bar{\rho}^{\frac{\gamma+1}{\gamma}\delta}
\left[\left(\rho^{\gamma}-\bar{\rho}^{\gamma}\right)\left(\rho-\bar{\rho}\right)\right]  dx\right)^{\frac{\gamma}{\gamma+1}}    \left(\int\bar{\rho}^{-\delta(\gamma+1)}dx\right)^{\frac{1}{\gamma+1}}\\
&\leq C||\bar{\rho}^{\frac{\gamma+1}{\gamma}\delta}||^{\frac{\gamma}{\gamma+1}}_{L^{\infty}}\left(\int\left(\rho^{\gamma}-
\bar{\rho}^{\gamma}\right)\left(\rho-\bar{\rho}\right)  dx\right)^{\frac{\gamma}{\gamma+1}}\cdot (1+t)^{\frac{(\gamma+1)\delta+1}{(\gamma+1)^2}(1+\nu)}\\
&\leq C(1+t)^{\frac{1+\nu}{(\gamma+1)^2}}\left(\int
\left(\rho^{\gamma}-\bar{\rho}^{\gamma}\right)\left(\rho-\bar{\rho}\right)  dx\right)^{\frac{\gamma}{\gamma+1}},
\end{align*}
where the last inequality is from \eqref{rhobeta'}. Consequently, we have the estimate of $I_3$ as follows
\begin{equation}\label{I3}
\begin{aligned}
I_3\leq& C\int^{t}_{0}(1+\tau)^{\mu(\varepsilon)-1+\frac{1+\nu}{(\gamma+1)^2}}
\left(\int
\left(\rho^{\gamma}-\bar{\rho}^{\gamma}\right)\left(\rho-\bar{\rho}\right)  dx\right)^{\frac{\gamma}{\gamma+1}}d\tau\\
\leq& \int^{t}_{0}\int(1+\tau)^{\theta^*}
\left(\rho^{\gamma}-\bar{\rho}^{\gamma}\right)\left(\rho-\bar{\rho}\right)dxd\tau\\
&+C\int^{t}_{0}(1+\tau)^{\left[\mu(\varepsilon)-1+\frac{1+\nu}{(\gamma+1)^2}-\frac{\gamma\theta^*}{\gamma+1}\right](\gamma+1)}
d\tau\\
\leq &C,
\end{aligned}
\end{equation}
due to \eqref{lem9gen3}, $\theta^*=\nu$ and
\begin{align*}
\mu(\varepsilon)-1+\frac{1+\nu}{(\gamma+1)^2}-\frac{\gamma\theta^*}{\gamma+1}+\frac{1}{\gamma+1}=-\varepsilon<0.
\end{align*}

Now, we are left to estimate $I_4$. Rewrite $I_4$ as follows
\begin{equation}\label{I412}
\begin{aligned}
I_4&=\int^{t}_{0}\int\mu(\varepsilon)(1+\tau)^{\mu(\varepsilon)-1}Q_*d xd\tau\\
&\leq C\int^{t}_{0}\int(1+\tau)^{\mu(\varepsilon)-1}
\left[\frac{m^2}{\rho}-\frac{\bar{m}^2}{\bar{\rho}}
+\frac{\bar{m}^2}{\bar{\rho}^2}(\rho-\bar{\rho})-\frac{2\bar{m}}{\bar{\rho}}(m-\bar{m})\right]dxd\tau\\
&=:I_{41}+I_{42}+I_{43}+I_{44}.
\end{aligned}
\end{equation}
First, using \eqref{omeganu} and \eqref{omegavar}, we have the estimation of $I_{41}$ as follows,
\begin{equation}\label{I41}
\begin{aligned}
I_{41}=\int^{t}_{0}\int(1+\tau)^{\mu(\varepsilon)-1}\frac{m^2}{\rho}dxd\tau\leq C,
\end{aligned}
\end{equation}
where the last inequality is from
\begin{align*}
&\mu(\varepsilon)-1-\left[\frac{(\gamma-1)(\nu+1)}{\gamma+1}-\nu-\varepsilon\right]\\
=&\frac{\gamma^2+\gamma-1}{(\gamma+1)^2}(1+\nu)-
\frac{(\gamma-1)(\nu+1)}{\gamma+1}+\nu-1\\
=& \frac{\gamma}{(\gamma+1)^2}+\nu-1<0,
\end{align*}
by $\nu<\frac{\gamma}{\gamma+2}$.
As for $I_{42}$, it follows from \eqref{rhobeta} that
\begin{equation}\label{I42}
\begin{aligned}
I_{42}=\int^{t}_{0}\int(1+\tau)^{\mu(\varepsilon)-1}\frac{\bar{m}^2}{\bar{\rho}}dxd\tau
\leq\int^{t}_{0}(1+\tau)^{\mu(\varepsilon)-1-\frac{2\gamma-2\nu}{\gamma+1}}d\tau\leq C,
\end{aligned}
\end{equation}
due to $\nu<\frac{\gamma}{\gamma+2}$ and
\begin{align*}
\mu(\varepsilon)-\frac{2\gamma-2\nu}{\gamma+1}\leq&\frac{\gamma^2+\gamma-1}{(\gamma+1)^2}(1+\nu)-\frac{2\gamma-2\nu}{\gamma+1}\\
=&-\frac{\gamma^2+\gamma+1-\nu(\gamma^2+3\gamma+1)}{(\gamma+1)^2}<0.
\end{align*}
Then, to estimate $I_{43}$ and $I_{44}$, using \eqref{rhobeta}, we first have
\begin{align*}
\int\frac{\bar{m}^2}{\bar{\rho}^2}(\rho-\bar{\rho})dx\leq\big|\big|\bar{u}^2\big|\big|_{L^{\frac{\gamma+1}{\gamma}}}
\big|\big|\rho-\bar{\rho}\big|\big|_{L^{\gamma+1}}\leq C(1+t)^{-\frac{2\gamma^2+\gamma-\nu(2\gamma+3)}{(\gamma+1)^2}}\big|\big|\rho-\bar{\rho}\big|\big|_{L^{\gamma+1}},
\end{align*}
and
\begin{align*}
\int\frac{2\bar{m}}{\bar{\rho}}(m-\bar{m})dx\leq C\big|\big|\bar{u}\big|\big|
\big|\big|m-\bar{m}\big|\big|\leq C(1+t)^{-\frac{2\gamma-1-3\nu}{2(\gamma+1)}}\big|\big|y_t\big|\big|,
\end{align*}
which infers that
\begin{equation}\label{I43}
\begin{aligned}
I_{43}=&\int^{t}_{0}\int(1+\tau)^{\mu(\varepsilon)-1}\frac{\bar{m}^2}{\bar{\rho}^2}(\rho-\bar{\rho})dxd\tau\\
\leq&\int^{t}_{0}\int(1+\tau)^{\mu(\varepsilon)-1-\frac{2\gamma^2+\gamma-\nu(2\gamma+3)}
{(\gamma+1)^2}}\big|\big|\rho-\bar{\rho}\big|\big|_{L^{\gamma+1}}dxd\tau\\
\leq&\int^{t}_{0}(1+\tau)^{\left[\mu(\varepsilon)-\frac{3\gamma^2+3\gamma+1-\nu(2\gamma+3)}{(\gamma+1)^2}-\frac{\theta^*}
{\gamma+1}\right]\frac{\gamma+1}{\gamma}}d\tau\\
&+C\int^{t}_{0}\int(1+\tau)^{\theta^*}\big|\rho-\bar{\rho}\big|^{\gamma+1}dxd\tau\\
\leq &C,
\end{aligned}
\end{equation}
and
\begin{equation}\label{I44}
\begin{aligned}
I_{44}=&\int^{t}_{0}\int(1+\tau)^{\mu(\varepsilon)-1}\frac{2\bar{m}}{\bar{\rho}}(m-\bar{m})dxd\tau\\
\leq& C\int^{t}_{0}\int(1+\tau)^{\mu(\varepsilon)-1-\frac{2\gamma-1-3\nu}{2(\gamma+1)}}\big|\big|y_\tau\big|\big|dxd\tau\\
\leq&\int^{t}_{0}(1+\tau)^{2\left[\mu(\varepsilon)-\frac{4\gamma+1-3\nu}{2(\gamma+1)}-\frac{\mu^*(\varepsilon)-\nu}
{2}\right]}d\tau\\
&+C\int^{t}_{0}\int(1+\tau)^{\mu^*(\varepsilon)-\nu}y^{2}_{\tau}dxd\tau\\
\leq& C,
\end{aligned}
\end{equation}
where we have used \eqref{lem9gen3}, \eqref{I1} and \eqref{I2}.

Thus, in view of \eqref{I412}-\eqref{I44}, we deduce that
$$
I_{4}\leq C.
$$
Plugging this and \eqref{I1}-\eqref{I3} into \eqref{etamuv}, we immediately obtain \eqref{phivar} for $\nu\in\left[0,\frac{\gamma}{\gamma+2}\right)$.

\vspace{4pt}
\noindent\textbf{$Case~ II: \nu \in \left[\frac{\gamma}{\gamma +2} , 1\right).$}

In this case, setting
\begin{equation}\label{hatmuvar}
  \hat{\mu}(\varepsilon)=\frac{2\gamma-1-\nu}{\gamma+1}-\varepsilon,
\end{equation}
multiplying \eqref{detadt'} with $(1+t)^{\hat{\mu}(\varepsilon)}$ and integrating over $(0,t)\times \mathbf{R}$, we have
\begin{equation}\label{etamuv'}
\begin{aligned}
&(1+t)^{\hat{\mu}(\varepsilon)} \int \eta_* d x +\int^{t}_{0}\int(1+\tau)^{\hat{\mu}(\varepsilon)-\nu}Q_* dxd\tau\\
\leq&\int^{t}_{0}\int(1+\tau)^{\hat{\mu}(\varepsilon)}\frac{\bar{m}}{\bar{\rho}}\frac{\bar{R}}{\bar{\rho}}
\left(\rho-\bar{\rho}\right)dxd\tau+\int^{t}_{0}\int(1+\tau)^{\hat{\mu}(\varepsilon)} \frac{\bar{R}}{\bar{\rho}}\left(m-\bar{m}\right)dxd\tau\\
&+\int^{t}_{0}\int\hat{\mu}(\varepsilon)(1+\tau)^{\hat{\mu}(\varepsilon)-1}P_*dxd\tau+
\int^{t}_{0}\int\hat{\mu}(\varepsilon)(1+\tau)^{\hat{\mu}(\varepsilon)-1}Q_*d xd\tau+C\\
=&:\hat{I}_1+\hat{I}_2+\hat{I}_3+\hat{I}_4+C.
\end{aligned}
\end{equation}

First, we estimate $\hat{I}_1$, $\hat{I}_2$  and $\hat{I}_3$. Since the estimates of $\frac{\bar{R}}{\bar{\rho}}$, $\bar{\rho}$ and $\frac{\bar{m}}{\bar{\rho}}$ are the same in the two cases, it follows from \eqref{lem9gen3}, \eqref{I1}, \eqref{I2} and \eqref{I3} that
\begin{equation}\label{I1'}
\begin{aligned}
\hat{I}_1\leq& \int^{t}_{0}(1+\tau)^{\hat{\mu}(\varepsilon)-\frac{3\gamma^2+3\gamma+1-\nu(3\gamma+2)}{(\gamma+1)^2}}
\big|\big|\rho-\bar{\rho}\big|\big|_{L^{\gamma+1}}d\tau\\
\leq& \int^{t}_{0}\int(1+\tau)^{\theta^*}\big|\rho-\bar{\rho}\big|^{\gamma+1}dxd\tau\\
&+C\int^{t}_{0}(1+\tau)^{\left[\hat{\mu}(\varepsilon)-\frac{3\gamma^2+3\gamma+1-\nu(3\gamma+2)}{(\gamma+1)^2}-\frac{\theta^*}{\gamma+1}\right]
\frac{\gamma+1}{\gamma}}d\tau\\
\leq &C,
\end{aligned}
\end{equation}
\begin{equation}\label{I2'}
\begin{aligned}
\hat{I}_2\leq& \int^{t}_{0}(1+\tau)^{\hat{\mu}(\varepsilon)-\frac{4\gamma+1-3\nu}{2(\gamma+1)}}
||y_{\tau}||d\tau\\
\leq& \int^{t}_{0}\int(1+\tau)^{\mu^*(\varepsilon)-\nu}y^{2}_{\tau}dxd\tau\\
&+C\int^{t}_{0}(1+\tau)^{2\left[\hat{\mu}(\varepsilon)-\frac{4\gamma+1-3\nu}{2(\gamma+1)}-\frac{\mu^*(\varepsilon)-\nu}{2}\right]}
d\tau\\
\leq &C,
\end{aligned}
\end{equation}
and
\begin{equation}\label{I3'}
\begin{aligned}
\hat{I}_3\leq& C\int^{t}_{0}(1+\tau)^{\hat{\mu}(\varepsilon)-1+\frac{1+\nu}{(\gamma+1)^2}}
\left(\int
\left(\rho^{\gamma}-\bar{\rho}^{\gamma}\right)\left(\rho-\bar{\rho}\right)  dx\right)^{\frac{\gamma}{\gamma+1}}d\tau\\
\leq& \int^{t}_{0}\int(1+\tau)^{\theta^*}
\left(\rho^{\gamma}-\bar{\rho}^{\gamma}\right)\left(\rho-\bar{\rho}\right)dxd\tau\\
&+C\int^{t}_{0}(1+\tau)^{\left[\mu(\varepsilon)-1+\frac{1+\nu}{(\gamma+1)^2}-\frac{\gamma\theta^*}{\gamma+1}\right](\gamma+1)}
d\tau\\
\leq &C
\end{aligned}
\end{equation}
due to
\begin{align*}
&\hat{\mu}(\varepsilon)-\frac{3\gamma^2+3\gamma+1-\nu(3\gamma+2)}{(\gamma+1)^2}-\frac{\theta^*}{\gamma+1}+\frac{\gamma}{\gamma+1}\\
=&\frac{2\gamma-1-\nu}{\gamma+1}-\varepsilon-\frac{2\gamma^2+3\gamma+1-\nu(3\gamma+3)}{(\gamma+1)^2}\\
=&\frac{2\nu-2}{\gamma+1}-\varepsilon<0,
\end{align*}
\begin{align*}
&\hat{\mu}(\varepsilon)-\frac{4\gamma+1-3\nu}{2(\gamma+1)}-\frac{\mu^*(\varepsilon)-\nu}{2}+\frac{1}{2}\\
&=\frac{2\gamma-1-\nu}{\gamma+1}-\frac{\varepsilon}{2}-\frac{1}{4}+\frac{1}{2}\left(\frac{\nu}{2}+\frac{\nu+1}{\gamma+1}\right)
-\frac{4\gamma+1-3\nu}{2(\gamma+1)}\\
&=\frac{\nu-1}{\gamma+1}+\frac{\nu-1}{4}-\frac{\varepsilon}{2}<0,
\end{align*}
and
\begin{align*}
\hat{\mu}(\varepsilon)-1+\frac{1+\nu}{(\gamma+1)^2}-\frac{\gamma\theta^*}{\gamma+1}+\frac{1}{\gamma+1}=-\varepsilon<0.
\end{align*}
by \eqref{1.55}, \eqref{1.56} and $\nu \in \left[\frac{\gamma}{\gamma +2} , 1\right)$.

As for $\hat{I}_4$, we rewrite it as follows
\begin{equation}\label{I412'}
\begin{aligned}
\hat{I}_4&=\int^{t}_{0}\int\hat{\mu}(\varepsilon)(1+\tau)^{\hat{\mu}(\varepsilon)-1}Q_*d xd\tau\\
&\leq C\int^{t}_{0}\int(1+\tau)^{\hat{\mu}(\varepsilon)-1}
\left[\frac{m^2}{\rho}-\frac{\bar{m}^2}{\bar{\rho}}
+\frac{\bar{m}^2}{\bar{\rho}^2}(\rho-\bar{\rho})-\frac{2\bar{m}}{\bar{\rho}}(m-\bar{m})\right]dxd\tau\\
&=:\hat{I}_{41}+\hat{I}_{42}+\hat{I}_{43}+\hat{I}_{44}.
\end{aligned}
\end{equation}
First, we estimate $\hat{I}_{41}$ and $\hat{I}_{42}$. It follows from $\nu\in\left[\frac{\gamma}{\gamma+2},1\right)$, \eqref{omeganu} and \eqref{omegavar} that
\begin{equation}\label{I41'}
\begin{aligned}
\hat{I}_{41}=\int^{t}_{0}\int(1+\tau)^{\hat{\mu}(\varepsilon)-1}\frac{m^2}{\rho}dxd\tau\leq C,
\end{aligned}
\end{equation}
where the last inequality is from
\begin{align*}
&\hat{\mu}(\varepsilon)-1-\left[\frac{(\gamma-1)(\nu+1)}{\gamma+1}-\nu-\varepsilon\right]\\
=&\frac{2\gamma-1-\nu}{\gamma+1}-\frac{(\gamma-1)(\nu+1)}{\gamma+1}+\nu-1\\
=& \frac{\nu-1}{\gamma+1}-\varepsilon<0,
\end{align*}
and
\begin{equation}\label{I42'}
\begin{aligned}
\hat{I}_{42}=\int^{t}_{0}\int(1+\tau)^{\hat{\mu}(\varepsilon)-1}\frac{\bar{m}^2}{\bar{\rho}}dxd\tau
\leq\int^{t}_{0}(1+\tau)^{\hat{\mu}(\varepsilon)-1-\frac{2\gamma-2\nu}{\gamma+1}}d\tau\leq C,
\end{aligned}
\end{equation}
due to
\begin{align*}
\hat{\mu}(\varepsilon)-\frac{2\gamma-2\nu}{\gamma+1}=\frac{2\gamma-1-\nu}{\gamma+1}-\frac{2\gamma-2\nu}{\gamma+1}-\varepsilon
<\frac{\nu-1}{\gamma+1}<0.
\end{align*}
Now, we estimate $\hat{I}_{43}$ and $\hat{I}_{44}$. Similar to \eqref{I43} and \eqref{I44}, we deduce that
\begin{equation}\label{I43'}
\begin{aligned}
\hat{I}_{43}=&\int^{t}_{0}\int(1+\tau)^{\hat{\mu}(\varepsilon)-1}\frac{\bar{m}^2}{\bar{\rho}^2}(\rho-\bar{\rho})dxd\tau\\
\leq&\int^{t}_{0}\int(1+\tau)^{\hat{\mu}(\varepsilon)-1-\frac{2\gamma^2+\gamma-\nu(2\gamma+3)}
{(\gamma+1)^2}}\big|\big|\rho-\bar{\rho}\big|\big|_{L^{\gamma+1}}dxd\tau\\
\leq&\int^{t}_{0}(1+\tau)^{\left[\hat{\mu}(\varepsilon)-\frac{3\gamma^2+3\gamma+1-\nu(2\gamma+3)}{(\gamma+1)^2}-\frac{\theta^*}
{\gamma+1}\right]\frac{\gamma+1}{\gamma}}d\tau\\
&+C\int^{t}_{0}\int(1+\tau)^{\theta^*}\big|\rho-\bar{\rho}\big|^{\gamma+1}dxd\tau\\
\leq &C,
\end{aligned}
\end{equation}
and
\begin{equation}\label{I44'}
\begin{aligned}
\hat{I}_{44}=&\int^{t}_{0}\int(1+\tau)^{\hat{\mu}(\varepsilon)-1}\frac{2\bar{m}}{\bar{\rho}}(m-\bar{m})dxd\tau\\
\leq& C\int^{t}_{0}\int(1+\tau)^{\hat{\mu}(\varepsilon)-1-\frac{2\gamma-1-3\nu}{2(\gamma+1)}}\big|\big|y_\tau\big|\big|dxd\tau\\
\leq&\int^{t}_{0}(1+\tau)^{2\left[\hat{\mu}(\varepsilon)-\frac{4\gamma+1-3\nu}{2(\gamma+1)}-\frac{\mu^*(\varepsilon)-\nu}
{2}\right]}d\tau\\
&+C\int^{t}_{0}\int(1+\tau)^{\mu^*(\varepsilon)-\nu}y^{2}_{\tau}dxd\tau\\
\leq& C,
\end{aligned}
\end{equation}
by \eqref{lem9gen3}, \eqref{I1'} and \eqref{I2'}.

Substituting \eqref{I41'}-\eqref{I44'} into \eqref{I412'}, we have $\hat{I}_4\leq C$, which together with \eqref{etamuv'}-\eqref{I3'} yields
that
$$
(1+t)^{\hat{\mu}(\varepsilon)} \int \eta_* d x +\int^{t}_{0}\int(1+\tau)^{\hat{\mu}(\varepsilon)-\nu}Q_* dxd\tau\leq C,
$$
and so justifies \eqref{phivar} by \eqref{hatmuvar} for $ \nu \in\left[\frac{\gamma}{\gamma+2} 1\right)$.
\end{proof}

{\bf Now, we are ready to prove Theorem \ref{thm1}}. From \eqref{eta_*}, \eqref{pQ} and Theorem \ref{thm4}, we have
$$
(1+t)^{\phi(\varepsilon)} \int P_* d x\leq C.
$$
Combining this, \eqref{1phi} and Lemma \ref{lem32}, we obtain \eqref{1rr+1} and \eqref{1.5} with $\gamma\ge2$.

As for $\eqref{1rr+1'}$ and \eqref{1.6}, we distinguish two cases.

\vspace{4pt}
\noindent\textbf{$Case~ A: \gamma\ge2.$}

Using Lemma \ref{lem32}, \eqref{rhobeta} and $\eqref{1rr+1}$, we obtain the $L^1$ convergence rate on density as follows
\begin{align*}
\int_{-\infty}^{+\infty}|\rho-\bar{\rho}| d x &\leq\left(\int_{-\infty}^{+\infty} \bar{\rho}^{\gamma-2}|\rho-\bar{\rho}|^2 d x\right)^{\frac{1}{2}}\left(\int_{-\infty}^{+\infty} \bar{\rho}^{2-\gamma} d x\right)^{\frac{1}{2}}\\
&\leq C(1+t)^{-\frac{\phi(\varepsilon)}{2}}(1+t)^{\frac{\gamma-1}{2(\gamma+1)}(1+\nu)}.
\end{align*}
This and \eqref{1phi} together justify $\eqref{1rr+1'}$ for $\gamma\ge2$.

\vspace{4pt}
\noindent\textbf{$Case~ B: 1<\gamma<2.$}

In this case, let
\begin{align*}
\Omega_1&=\left\{x:\rho=0, \,\bar{\rho}=0,\, |\rho-\bar{\rho}|\ge\frac{1}{2}\bar{\rho}\right\},\\
\Omega_2&=\left\{x:\rho\neq0, \,\bar{\rho}\neq0,\, |\rho-\bar{\rho}|<\frac{1}{2}\bar{\rho}\right\}.
\end{align*}
In $\Omega_1$, from Lemma \ref{lem4}, \eqref{rho0}, \eqref{eta_*}, Theorem \ref{thm4} and the H$\rm{\ddot{o}}$lder inequality, we see that for any $\delta<\frac{(\gamma-1)^2}{2\gamma}$,
\begin{equation}\label{omega1}
\begin{aligned}
\int_{\Omega_1}|\rho-\bar{\rho}| d x&\leq\left(\int_{\Omega_1}|\rho-\bar{\rho}|^{\gamma} d x\right)^{\frac{1}{\gamma}}\big|\big|\bar{\rho}^{\delta}\big|\big| _{L^{\frac{2\gamma}{\gamma-1}}(\Omega_1)}\big|\big|\bar{\rho}^{-\delta}\big|\big| _{L^{\frac{2\gamma}{\gamma-1}}(\Omega_1)}\\
&\leq C\left(\int_{\Omega_1}\rho^{\gamma}-\bar{\rho}^{\gamma}-\gamma\bar{\rho}^{\gamma-1}(\rho-\bar{\rho}) d x\right)^{\frac{1}{\gamma}}
\big|\big|\bar{\rho}^{\delta}\big|\big| _{L^{\frac{2\gamma}{\gamma-1}}}\big|\big|\bar{\rho}^{-\delta}\big|\big| _{L^{\frac{2\gamma}{\gamma-1}}}\\
&\leq C\left(\int\eta_* d x\right)^{\frac{1}{\gamma}}
(1+t)^{\frac{\gamma-1}{\gamma(\gamma+1)}(1+\nu)}\\
&\leq C(1+t)^{-\frac{\phi(\varepsilon)}{\gamma}}(1+t)^{\frac{\gamma-1}{\gamma(\gamma+1)}(1+\nu)}.
\end{aligned}
\end{equation}
On the other hand, in $\Omega_2$, from Lemma \ref{lem4}, \eqref{rho01/2}, \eqref{eta_*}, Theorem \ref{thm4} and the H$\rm{\ddot{o}}$lder inequality, we deduce that
\begin{equation}\label{omega2}
\begin{aligned}
\int_{\Omega_2}|\rho-\bar{\rho}| d x&\leq\left(\int_{\Omega_2} \bar{\rho}^{\gamma-2}|\rho-\bar{\rho}|^2 d x\right)^{\frac{1}{2}}\left(\int_{\Omega_2}\bar{\rho}^{2-\gamma} d x\right)^{\frac{1}{2}}\\
&\leq C\left(\int_{\Omega_2}\rho^{\gamma}-\bar{\rho}^{\gamma}-\gamma\bar{\rho}^{\gamma-1}(\rho-\bar{\rho}) d x\right)^{\frac{1}{2}}\left(\int\bar{\rho}^{2-\gamma} d x\right)^{\frac{1}{2}}\\
&\leq C\left(\int \eta_* d x\right)^{\frac{1}{2}}(1+t)^{\frac{\gamma-1}{2(\gamma+1)}(1+\nu)}\\
&\leq C(1+t)^{-\frac{\phi(\varepsilon)}{2}}(1+t)^{\frac{\gamma-1}{2(\gamma+1)}(1+\nu)}.
\end{aligned}
\end{equation}
Using \eqref{omega1} and \eqref{omega2} yields
\begin{align*}
\int|\rho-\bar{\rho}| d x& =\int_{\Omega_1}|\rho-\bar{\rho}| d x +\int_{\Omega_2}|\rho-\bar{\rho}| d x \\
&\leq C(1+t)^{-\frac{1}{\gamma}\left[\phi(\varepsilon)-\frac{\gamma-1}{\gamma+1}(1+\nu)\right]}+
C(1+t)^{-\frac{1}{2}\left[\phi(\varepsilon)-\frac{\gamma-1}{\gamma+1}(1+\nu)\right]}\\
&\leq C(1+t)^{-\frac{1}{2}\left[\phi(\varepsilon)-\frac{\gamma-1}{\gamma+1}(1+\nu)\right]},
\end{align*}
where the last inequality is from $1<\gamma<2$ and $\phi(\varepsilon)-\frac{\gamma-1}{\gamma+1}(1+\nu)>0$. Thus, combining the above inequality and \eqref{1phi}, we obtain \eqref{1rr+1'} with $1<\gamma<2$.

Hence, we complete the proof of Theorem \ref{thm1}.

\vspace{0.4cm}

{\bf Declarations}

\vspace{0.4cm}

{\bf Data availability statement: } My manuscript has no associated data.

\vspace{0.4cm}

{\bf Conflict of interest:} The authors declare no conflict of interest.

\footnotesize

\end{document}